\documentclass[reqno,12pt]{article}
\usepackage{amsfonts,amssymb,amsmath,amsthm,epsfig,mathrsfs}
\usepackage{xcolor}

%%% Dimensioni pagina
 \setlength{\voffset}{1 cm} \setlength{\oddsidemargin}{0.5cm}
 \setlength{\textwidth}{16.0cm} \setlength{\textheight}{20.5cm}
 %%%%%%%%%%%%%%%%%%%%%%%%%%%%%%%%%%%%%%%%%%%%%%%%%

\newcommand{\N}{\mathbb{N}}
\newcommand{\R}{\mathbb{R}}

\newcommand{\res}{\mathop{\hbox{\vrule height 7pt width .5pt depth 0pt
\vrule height .5pt width 6pt depth 0pt}}\nolimits}

\newcommand{\Haus}[1]{{\mathscr S}^{#1}} % Misura di Hausdorff
\newcommand{\Leb}[1]{{\mathscr L}^{#1}} % Misura di Lebesgue
 % misure di probabilita'

 % misure di massa finita

\newcommand{\eps}{\varepsilon}

%%%%%%%%%%%%%%%%%%%%%%%%%%%%%%%%%%%%%%%%%%%%%%%%%%%%%%%%%%%%%%%%%%%%%%%%%%

\newtheorem{theorem}{Theorem}[section]
\newtheorem{lemma}[theorem]{Lemma}
\newtheorem{definition}[theorem]{Definition}
\newtheorem{remark}[theorem]{Remark}
\newtheorem{proposition}[theorem]{Proposition}
\newtheorem{corollary}[theorem]{Corollary}
\newtheorem{ex}[theorem]{Example}

\title{Surface measures and convergence of the
Ornstein-Uhlenbeck semigroup in Wiener spaces}
\author{Luigi Ambrosio\footnote{Scuola Normale Superiore,
p.za dei Cavalieri 7, I-56126 Pisa, Italy, e--mail:
l.ambrosio@sns.it}, Alessio Figalli\footnote{e--mail:
figalli@math.utexas.edu}}

\begin{document}

\maketitle

\begin{abstract}
We study points of density $1/2$ of sets of finite perimeter in
infinite-dimensional Gaussian spaces and prove that, as in the
finite-dimensional theory, the surface measure is concentrated on
this class of points. Here density $1/2$ is formulated in terms of
the pointwise behaviour of the Ornstein-Uhlembeck semigroup.
\end{abstract}

\section{Introduction}

The theory of sets of finite perimeter and $BV$ functions in Wiener
spaces, i.e., Banach spaces endowed with a Gaussian Borel
probability measure $\gamma$, has been initiated by Fukushima and
Hino in \cite{fuk99,fuk2000_1,fuk2000_2}. More recently, some basic
questions of the theory have been investigated in \cite{hin09set}
and in \cite{AMMP,AMP} (see also \cite{ADP} for a slightly different
framework). One motivation for this theory is the development of
Gauss-Green formulas in infinite-dimensional domains; as in the
finite-dimensional theory, it turns out that for nonsmooth domains
the surface measure might be supported in a set much smaller than
the topological boundary (see also the precise analysis made in
\cite{Zambotti}, in a particular class of infinite-dimensional
domains).

The basic question we would like to consider is the research of
infinite-dimensional analogues of the classical fine properties of
$BV$ functions and sets of finite perimeter in finite-dimensional
(Gaussian) spaces.

For this reason we start first with a discussion of the
finite-dimensional theory, referring to \cite{fed} and \cite{afp}
for much more on this subject. Recall that a Borel set
$E\subset\R^m$ is said to be of \textit{finite perimeter} if there exists a
vector valued measure $D\chi_E=(D_1\chi_E,\ldots,D_m\chi_E)$ with
finite total variation in $\R^m$ satisfying the integration by parts
formula:
\begin{equation}\label{byparts}
\int_E\frac{\partial\phi}{\partial
x_i}\,dx=-\int_{\R^m}\phi\,dD_i\chi_E\qquad \forall i=1,\ldots,m,
\,\,\forall\phi\in C^1_c(\R^m).
\end{equation}
De~Giorgi proved in \cite{DeG1} a deep result on the structure of
$D\chi_E$. First of all he identified a set ${\mathcal F}E$, called
by him \textit{reduced boundary}, on which $|D\chi_E|$ is
concentrated, and defined a pointwise inner normal
$\nu_E(x)=(\nu_{E,1}(x),\ldots,\nu_{E,m}(x))$ (see \eqref{dgredbou});
then, through a
suitable blow-up procedure, he proved that ${\mathcal F}E$ is
countably rectifiable (more precisely, it is contained in the union of
countably many graphs of Lipschitz functions defined on hyperplanes of
$\R^m$); finally, he proved the representation formula
$D\chi_E=\nu_E\Haus{m-1}\res{\mathcal F}E$, where $\Haus{m-1}$ is
the $(m-1)$-dimensional spherical Hausdorff measure in $\R^m$. In
light of these results, the integration by parts formula reads
$$
\int_E\frac{\partial\phi}{\partial x_i}\,dx=-\int_{{\mathcal
F}E}\phi\nu_{E,i}\,d\Haus{m-1}\qquad \forall i=1,\ldots,m,
\,\,\forall\phi\in C^1_c(\R^m).
$$
A few years later, Federer proved in \cite{fed1} that the same
representation result of $D\chi_E$ holds for another concept of
boundary, called \textit{essential boundary}:
$$
\partial^*E:=\left\{x\in\R^m:\ \limsup_{r\downarrow
0}\frac{\Leb{m}(B_r(x)\cap E)}{\Leb{m}(B_r(x))}>0,\qquad
\limsup_{r\downarrow 0}\frac{\Leb{m}(B_r(x)\setminus
E)}{\Leb{m}(B_r(x))}>0 \right\},
$$
where $\Leb{m}$ is the $m$--dimensional Lebesgue measure (this
corresponds to points neither of density 0, nor of density 1).
Indeed, a consequence of the De~Giorgi's blow-up procedure is that
${\mathcal F}E\subset \partial^* E$ (because tangent sets to $E$ at
all points in the reduced boundary are halfspaces, whose density at
the origin is $1/2$), and in \cite{fed1} it is shown that
$\Haus{m-1}(\partial^*E\setminus {\mathcal F}E)=0$. Since the set
$E^{1/2}$ of points of density $1/2$
$$
E^{1/2}:=\left\{x\in\R^m:\ \lim_{r\downarrow
0}\frac{\Leb{m}(B_r(x)\cap E)}{\Leb{m}(B_r(x))}=\frac{1}{2}\right\},
$$
is in between the two, one can also use it as a good definition of
boundary.

When looking for the counterpart of De~Giorgi's and Federer's
results in infinite-dimensional spaces, one can consider a suitable
notion of ``distributional derivative'' along Cameron-Martin
directions $D_\gamma\chi_E$ and surface measure $|D_\gamma\chi_E|$.
But, several difficulties arise:
\begin{itemize}
\item[(i)] The classical concept of Lebesgue approximate continuity,
underlying also the definition of essential boundary, seems to fail
or seems to be not reproducible in Gaussian spaces $(X,\gamma)$. For
instance, in \cite{Preiss} it is shown that in general the balls of
$X$ cannot be used, and in any case the norm of $X$ is not natural
from the point of view of the calculus in Wiener spaces, where no
intrinsic metric structure exists and the ``differentiable''
structure is induced by $H$.
\item[(ii)] Suitable notions of codimension-1 Hausdorff measure,
of rectifiability and of essential/reduced boundary have to be
devised.
\end{itemize}
Nevertheless, some relevant progresses have been obtained by
Feyel-De la Pradelle in \cite{feydelpra}, by Hino in \cite{hin09set}
and, on the rectifiability issue, by the first author, Miranda and
Pallara in \cite{AMP}. In \cite{feydelpra} a family of spherical
Hausdorff pre-measures ${\mathscr S}^{\infty-1}_F$ has been
introduced by looking at the factorization $X={\rm
Ker}(\Pi_F)\otimes F$, with $F$ $m$-dimensional subspace of $H$,
considering the measures ${\mathscr S}^{m-1}$ on the $m$-dimensional
fibers of the decomposition. A crucial monotonicity property of
these pre-measures with respect to $F$ allows to define ${\mathscr
S}^{\infty-1}_{FDP}$ (here, $FDP$ stands for Feyel-De la Pradelle)
as $\lim_F{\mathscr S}^{\infty-1}_F$, the limit being taken in the
sense of directed sets. This Hausdorff measure, when restricted to
the boundary of a ``nice'' set (in the sense of Malliavin calculus)
is then shown to be consistent with the surface measure defined in
\cite{aim}. In \cite{hin09set} this approach has been used to build
a Borel set $\partial^*_{{\mathcal F}}E$, called cylindrical
essential boundary, for which the representation formula
\begin{equation}\label{rapphino}
|D_\gamma\chi_E|={\mathscr S}^{\infty-1}_{{\mathcal
F}}\res\partial^*_{{\cal F}}E
\end{equation}
holds. Here ${\cal F}=\{F_n\}_{n\geq 1}$ is an nondecreasing family
of finite-dimensional subspaces of $\tilde{H}$ (see \eqref{defhstar} for
the definition of $\tilde{H}$) whose union is dense in $H$ and ${\mathscr
S}^{\infty-1}_{{\mathcal F}}=\lim_n{\mathscr S}^{\infty-1}_{F_n}$.
Notice that, while the left hand side in the representation formula
is independent of the choice of ${\mathcal F}$, both the cylindrical
essential boundary and ${\mathscr S}^{\infty-1}_{{\mathcal F}}$ a
priori depend on ${\mathcal F}$ (see Remark~\ref{rcomparehino} for a
more detailed discussion). The problem of getting a representation
formula in terms of a coordinate-free measure ${\mathscr
S}^{\infty-1}$ is strongly related to the problem of finding
coordinate-free definitions of reduced/essential boundary.

In this paper, answering in part to questions raised in
\cite{hin09set} and in \cite{AMP}, we propose an
infinite-dimensional counterpart of $E^{1/2}$ and use it to provide
a coordinate-free version of \eqref{rapphino}.

In view of the quite general convergence results illustrated in
\cite{Stein} it is natural, in this context, to think of the
Ornstein-Uhlenbeck semigroup $T_t\chi_E$ starting from $\chi_E$, for
small $t$, as an analog of the mean value of $\chi_E$ on small
``balls''. Also, it is already known starting from \cite{DeG0} (see
also \cite{fuk2000_1,fuk2000_2,AMMP,ledu1}) that surfaces measures
are intimately connected to the behavior of $T_t\chi_E$ for small
$t$. Our first main result provides strong convergence of
$T_t\chi_E$ as $t\downarrow 0$, if we take the surface measure as
reference measure:
\begin{theorem}\label{tessbou1}
Let $E$ be a Borel set of finite perimeter in $(X,\gamma)$. Then
$$
\lim_{t\downarrow
0}\int_X|T_t\chi_E-\frac{1}{2}|^2\,d|D_\gamma\chi_E|=0.
$$
\end{theorem}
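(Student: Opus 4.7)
My plan is to reduce the $L^2$ statement to a pointwise almost-everywhere claim and then invoke dominated convergence. Since $T_t\chi_E$ takes values in $[0,1]$, the integrand $|T_t\chi_E-1/2|^2$ is bounded by $1/4$, and $|D_\gamma\chi_E|$ is a finite Borel measure; hence it will suffice to prove
$$T_t\chi_E(x)\longrightarrow\tfrac12\qquad\text{for }|D_\gamma\chi_E|\text{-a.e. }x\in X.$$

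I would first treat the finite-dimensional model: if $E\subset\R^m$ has finite perimeter and $x_0\in\mathcal FE$, the blow-ups $(E-x_0)/r$ converge in $L^1_{\mathrm{loc}}$ to the tangent half-space $H_{x_0}$ (De~Giorgi). Since Mehler's formula in $\R^m$ writes $T_t\chi_E(x_0)$ as a Gaussian average of $\chi_E$ centred at $e^{-t}x_0$ at scale $\sqrt{1-e^{-2t}}$, this blow-up analysis yields $T_t\chi_E(x_0)\to 1/2$ for every $x_0\in\mathcal FE$, hence $|D\chi_E|$-almost everywhere.

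To lift this to $(X,\gamma)$ I would use a cylindrical reduction. Fix an increasing family $\mathcal F=\{F_n\}$ of finite-dimensional subspaces of $\tilde H$ with $\overline{\bigcup_n F_n}=H$, and decompose $X=F_n\oplus Y_n$ with $Y_n=\ker\pi_{F_n}$ and $\gamma=\gamma_{F_n}\otimes\gamma_{Y_n}$. A direct computation from Mehler's formula, writing $x=z+y$ with $z\in F_n$ and $y\in Y_n$, gives
$$T_t\chi_E(z+y)=\int_{Y_n} T_t^{F_n}\chi_{E_{e^{-t}y+\sqrt{1-e^{-2t}}w}}(z)\,d\gamma_{Y_n}(w),$$
where $E_{y'}:=\{u\in F_n:u+y'\in E\}$ is the slice. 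Thanks to the representation \eqref{rapphino} of $|D_\gamma\chi_E|$ via the cylindrical essential boundary $\partial^*_{\mathcal F}E$ of \cite{hin09set}, at $|D_\gamma\chi_E|$-a.e. point $x=z+y$ the base point $z$ is a reduced-boundary point of the slice $E_y$ as soon as $n$ is large enough. Combining this with the fact that $E_{y'}\to E_y$ in $L^1_{\mathrm{loc}}(F_n)$ as $y'\to y$ in $Y_n$ allows the finite-dimensional assertion to be applied slice by slice, producing the desired pointwise limit as $t\downarrow 0$.

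The principal obstacle is the coordination of the two limits $n\to\infty$ and $t\downarrow 0$: the finite-dimensional convergence along slices is not uniform in the slice, and the exceptional set where it fails must be shown to be $|D_\gamma\chi_E|$-negligible. I expect to resolve this through a Fubini-type disintegration of $|D_\gamma\chi_E|$ along the cylindrical decomposition, combined with a diagonal choice $n=n(t)\uparrow\infty$ as $t\downarrow 0$; the monotonicity of the Feyel--De~la~Pradelle pre-measures ${\mathscr S}^{\infty-1}_{F_n}$ underlying \eqref{rapphino} should make this coordination rigorous.
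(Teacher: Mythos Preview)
Your reduction to pointwise $|D_\gamma\chi_E|$-a.e.\ convergence is more than what is needed, and in fact more than what the paper establishes; the paper only obtains pointwise convergence \emph{along special subsequences} (see Definition~\ref{disoessbou}), as a corollary of Theorem~\ref{tessbou1}, not as an input to it. Whether $T_t\chi_E(x)\to 1/2$ for $|D_\gamma\chi_E|$-a.e.\ $x$ along the full family $t\downarrow 0$ appears to be open in infinite dimensions.

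The concrete gap in your lifting argument is the control of the slice variation. In the factorized formula
\[
T_t\chi_E(z,y)=\int_{Y_n} T_t^{F_n}\chi_{E_{y'(w)}}(z)\,d\gamma_{Y_n}(w),\qquad y'(w)=e^{-t}y+\sqrt{1-e^{-2t}}\,w,
\]
the difference $T_t^{F_n}\bigl(\chi_{E_{y'(w)}}-\chi_{E_y}\bigr)(z)$ is \emph{not} small pointwise in $(z,y,w)$: the kernel of $T_t^{F_n}$ at a reduced-boundary point $z$ has density of order $t^{-1/2}$ in the normal direction, while $|y'(w)-y|$ is of order $\sqrt{t}$, so the two effects balance and the error is generically $O(1)$. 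Your appeal to ``$E_{y'}\to E_y$ in $L^1_{\mathrm{loc}}$'' (itself problematic, since $y\mapsto E_y$ is only defined $\gamma_{Y_n}$-a.e.) is not fine enough to beat this blow-up. This obstacle is present already for fixed $n$, so a diagonal choice $n=n(t)$ will not remove it.

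The paper circumvents the pointwise question entirely. It proves two facts: (i) $T_t\chi_E\to 1/2$ weak$^*$ in $L^\infty(X,|D_\gamma\chi_E|)$, by a short soft argument based on the product rule \eqref{product} (Proposition~\ref{pweakstar}); and (ii) the a~priori bound $\limsup_{t\downarrow 0}\int_X (T_t\chi_E)^2\,d|D_\gamma\chi_E|\le \tfrac14|D_\gamma\chi_E|(X)$ (Proposition~\ref{papriori14}). Together these give strong $L^2$ convergence by the standard ``weak convergence plus convergence of norms'' criterion. The proof of (ii) uses precisely the factorization you wrote down, but it controls the slice error only \emph{after} integrating against the surface measure, via Lemma~\ref{lrectifiable} (which bounds $\sqrt{t}\,T_t^*\sigma_y\le\gamma$ for rectifiable $\sigma_y$) and the Poincar\'e-type estimate \eqref{poincarestr}; the residual term is $\tfrac{2}{\sqrt\pi}\int_X|\pi_{F^\perp}(\nu_E)|\,d|D_\gamma\chi_E|$, which vanishes as $F\uparrow H$. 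These integrated estimates are the missing ingredient in your sketch.
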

Since $|D_\gamma\chi_E|$ is orthogonal w.r.t. $\gamma$, it is
crucial for the validity of the result that $T_t\chi_E$ is not
understood in a functional way (i.e., as an element of
$L^\infty(X,\gamma)$), but really in a pointwise way through
Mehler's formula \eqref{mehler}. In this respect, the choice of a
Borel representative is important, see also
Proposition~\ref{pbogaregu} and \eqref{product}.

The proof of Theorem~\ref{tessbou1} is based on two results: first, by a soft
argument based on the product rule for weak derivatives, we show
the weak$^*$ convergence of $T_t\chi_E$ to $1/2$ in
$L^\infty(X,|D_\gamma\chi_E|)$. Then, by  a quite delicate finite-dimensional
approximation and factorization of the OU semigroup,
we show the apriori estimate
$$
\limsup_{t\downarrow
0}\int_X|T_t\chi_E|^2\,d|D_\gamma\chi_E|^2\leq\frac
{1}{4}|D_\gamma\chi_E|(X).
$$
Notice that in
finite dimensions Theorem~\ref{tessbou1} is easy to show, using the
fact that sets of finite perimeter are, for $|D_\gamma\chi_E|$-a.e.
$x$, close to halfspaces on small balls centered at $x$ (see the
proof of Proposition~\ref{findim1} and also Remark~\ref{ralessio}).

Thanks to Theorem~\ref{tessbou1}, we can choose an infinitesimal
sequence $(t_i)\downarrow 0$ such that
\begin{equation}\label{hino9}
 \sum_i\int_X|T_{t_i}\chi_E-\frac12|\,d|D_\gamma\chi_E|<\infty,
\end{equation}
This choice of $(t_i)$ ensures in particular the convergence of $T_{t_i}\chi_E$ to
$1/2$ $|D_\gamma\chi_E|$-a.e. in $X$, and motivates the next
definition:
\begin{definition}[Points of
density $1/2$]\label{disoessbou} Let $(t_i)\downarrow 0$ be such
that $\sum_i\sqrt{t_i}<\infty$ and \eqref{hino9} holds. We denote by
$E^{1/2}$ the set
\begin{equation}\label{hino10}
E^{1/2}:=\left\{x\in X:\
\lim_{i\to\infty}T_{t_i}\chi_E(x)=\frac12\right\}.
\end{equation}
\end{definition}
Notice that $|D_\gamma\chi_E|$ is concentrated on
$E^{1/2}$. With this definition, and defining $\Haus{\infty-1}$ as
the supremum of $\Haus{\infty-1}_F$ among all finite-dimensional
subspaces of $\tilde{H}$, we can prove our second main result:
\begin{theorem}\label{tessbou2}
Let $(t_i)\downarrow 0$ be such that $\sum_i\sqrt{t_i}<\infty$ and
\eqref{hino9} holds. Then the set $E^{1/2}$ defined in \eqref{hino10} has
finite $\Haus{\infty-1}$-measure and
\begin{equation}\label{hino31}
|D_\gamma\chi_E|=\Haus{\infty-1}\res E^{1/2}.
\end{equation}
\end{theorem}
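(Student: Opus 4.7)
The plan is to reduce Theorem \ref{tessbou2} to a single one-sided comparison and then deduce both parts of the statement. Concretely, I aim to prove that
\begin{equation}\label{bdplan}
\Haus{\infty-1}_F(B\cap E^{1/2})\leq |D_\gamma\chi_E|(B)
\end{equation}
for every finite-dimensional $F\subseteq\tilde H$ and every Borel $B\subseteq X$. Taking the supremum in $F$ gives $\Haus{\infty-1}(B\cap E^{1/2})\leq |D_\gamma\chi_E|(B)$, and in particular $\Haus{\infty-1}(E^{1/2})<\infty$. For the opposite inequality, fix a family $\mathcal F$ admissible in \eqref{rapphino}; since the hypotheses on $(t_i)$ together with Theorem \ref{tessbou1} concentrate $|D_\gamma\chi_E|$ on $E^{1/2}$, formula \eqref{rapphino} yields $\Haus{\infty-1}_{\mathcal F}(\partial^*_{\mathcal F}E\setminus E^{1/2})=0$, and therefore
$$
|D_\gamma\chi_E|(B)=\Haus{\infty-1}_{\mathcal F}(B\cap\partial^*_{\mathcal F}E\cap E^{1/2})\leq \Haus{\infty-1}(B\cap E^{1/2}),
$$
using the pointwise monotonicity built into $\Haus{\infty-1}=\sup_F\Haus{\infty-1}_F$. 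Combined with \eqref{bdplan} this gives \eqref{hino31}.

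The proof of \eqref{bdplan} will proceed by slicing. Fix $F\subseteq\tilde H$ with $\dim F=m$ and write $x=y+v$ with $y\in\ker(\Pi_F)$ and $v\in F$. The Feyel--De la Pradelle definition expresses
$$
\Haus{\infty-1}_F(B\cap E^{1/2})=\int_{\ker(\Pi_F)}\Haus{m-1}_{\gamma_F}\!\bigl((B\cap E^{1/2})_y\bigr)\,d\gamma_F^\perp(y),
$$
and the slicing theorem for Gaussian $BV$ functions (cf.\ \cite{AMP}) yields the lower bound
$$
|D_\gamma\chi_E|(B)\geq \int_{\ker(\Pi_F)}|D_{\gamma_F}\chi_{E_y}|(B_y)\,d\gamma_F^\perp(y),
$$
with $E_y\subseteq F$ the slice of $E$ at $y$, which has finite Gaussian perimeter for $\gamma_F^\perp$-a.e.\ $y$. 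Invoking the classical identification $|D_{\gamma_F}\chi_{E_y}|=\Haus{m-1}_{\gamma_F}\res(E_y)^{1/2}$ in the finite-dimensional space $F$, inequality \eqref{bdplan} reduces to the slicewise inclusion
\begin{equation}\label{sliceinclplan}
\Haus{m-1}_{\gamma_F}\bigl((E^{1/2})_y\setminus(E_y)^{1/2}\bigr)=0\qquad\text{for $\gamma_F^\perp$-a.e.\ $y$.}
\end{equation}

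To verify \eqref{sliceinclplan} I plan to use the OU factorization coming from Mehler's formula: splitting the Gaussian integral in \eqref{mehler} along $X=\ker(\Pi_F)\oplus F$ gives
$$
T_t\chi_E(y+v)=\int_{\ker(\Pi_F)} T_t^F\chi_{E_{u_t(y,z)}}(v)\,d\gamma_F^\perp(z),\qquad u_t(y,z):=e^{-t}y+\sqrt{1-e^{-2t}}\,z,
$$
where $T_t^F$ denotes the OU semigroup on $(F,\gamma_F)$. Writing the difference $T_{t_i}\chi_E(y+v)-T_{t_i}^F\chi_{E_y}(v)$ in this form exhibits it as an average over slices $E_{u_{t_i}(y,z)}$ lying within Gaussian scale $\sqrt{1-e^{-2t_i}}\sim\sqrt{2t_i}$ of $E_y$. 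I intend to bound the $L^2$-norm of this difference on the product space $F\times\ker(\Pi_F)$, weighted by $\Haus{m-1}_{\gamma_F}\otimes\gamma_F^\perp$, by an expression of order $O(\sqrt{t_i})$, following the finite-dimensional approximation used for Theorem \ref{tessbou1}. The assumption $\sum_i\sqrt{t_i}<\infty$ then activates a Borel--Cantelli argument which upgrades $L^2$-convergence to pointwise a.e.\ convergence along the subsequence, so that $(y+v)\in E^{1/2}$ forces $T_{t_i}^F\chi_{E_y}(v)\to 1/2$. By the finite-dimensional counterpart (Proposition \ref{findim1}) this identifies such $v$ with points of $(E_y)^{1/2}$, proving \eqref{sliceinclplan}.

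The main obstacle will be exactly this pointwise control of the factorization error along $(t_i)$: the identity is algebraically clean, but passing from the integrated estimates of Theorem \ref{tessbou1} to pointwise information requires the stronger summability hypothesis $\sum_i\sqrt{t_i}<\infty$, precisely because the Gaussian averaging in the factorization operates on scale $\sqrt{t_i}$. Once \eqref{sliceinclplan} is established, the assembly of Theorem \ref{tessbou2} from \eqref{bdplan} and Hino's formula proceeds directly, as outlined in the first paragraph.
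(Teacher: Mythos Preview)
Your reduction to the slicewise inclusion \eqref{sliceinclplan} is correct and matches the paper's strategy (this is \eqref{hino18} there), as does the assembly from \eqref{bdplan} via Hino's formula. The gap is in your proof of \eqref{sliceinclplan}, where two things fail. First, Proposition~\ref{findim1} points the wrong way: it says $T_t^F\chi_{E_y}(v)\to 1/2$ for $|D_{\gamma_F}\chi_{E_y}|$-a.e.\ $v$, i.e., for a.e.\ point \emph{already} in $\partial^* E_y$; it gives no information about points outside $\partial^* E_y$, which is precisely the set you must control. Second, the Borel--Cantelli step has no admissible reference measure. The measure $\Haus{m-1}_{\gamma_F}$ on the $m$-dimensional slice $F$ is not $\sigma$-finite, so an $L^2$ bound ``weighted by $\Haus{m-1}_{\gamma_F}\otimes\gamma_F^\perp$'' is meaningless; the estimates actually produced in the proof of Theorem~\ref{tessbou1} are against $|D_{\gamma_F}\chi_{E_y}|$, hence only see $\partial^* E_y$ and make the argument circular; and $\gamma_F$-a.e.\ convergence is too coarse, since Lebesgue-null subsets of $F$ can carry positive (even infinite) $\Haus{m-1}$ measure.

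The paper sidesteps both issues by \emph{not} decomposing $T_{t_i}\chi_E(\cdot,y)$ into $T_{t_i}^F\chi_{E_y}$ plus error. It works directly with $f_{i,y}:=T_{t_i}\chi_E(\cdot,y)\in W^{1,1}(F,\gamma_F)\cap C(F)$: the hypothesis $\sum_i\sqrt{t_i}<\infty$ together with Lemma~\ref{lpoincare} forces $f_{i,y}\to\chi_{E_y}$ in $L^1(\gamma_F)$ for $\gamma_Y$-a.e.\ $y$, and along a further subsequence the energies $\int_F|\nabla_F f_{i,y}|\,d\gamma_F$ converge to $|D_{\gamma_F}\chi_{E_y}|(F)$. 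The decisive finite-dimensional input is then Proposition~\ref{pfederer}, a capacity-theoretic result (built on Lemmas~\ref{lpoincare2} and~\ref{ldensity}) asserting that for any such approximating sequence the set $\{v:\lim_n u_n(v)=1/2\}$ lies in $\partial^* E_y$ up to $\Haus{m-1}$-null sets. This is exactly the missing converse to Proposition~\ref{findim1}, and it is the ingredient your plan lacks.
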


As we said, an advantage of \eqref{hino31} is its coordinate-free
character, see also Remark~\ref{rcomparehino} for a more detailed
comparison with Hino's cylindrical definition of essential boundary.
A drawback is its dependence on $(t_i)$; however, this dependence
enters only in the definition of $E^{1/2}$, and not in the one of $\Haus{\infty-1}$. Moreover,
it readily follows
from Theorem~\ref{tessbou2} that $E^{1/2}$ is uniquely determined up
to $\Haus{\infty-1}$-negligible sets (i.e., different sequences
produce equivalent sets). We consider merely as a (quite) technical
issue the replacement of $\Haus{\infty-1}$ with the larger measure
$\Haus{\infty-1}_{FDP}$ (defined considering \emph{all}
finite-dimensional subspaces of $H$) in \eqref{hino31}, for the
reasons explained in Remark~\ref{rtoomuch}.

As an example of application of the structure result for
$|D_\gamma\chi_E|$ provided by \eqref{hino31}, we can provide a
precise formula for the distributional derivative of the union of
two disjoints sets of finite perimeter. Given a set $E$ of finite
perimeter, write $D_\gamma \chi_E=\nu_E |D_\gamma \chi_E|$, with
$\nu_E:X\to H$ a Borel vectorfield satisfying $|\nu_E|_H=1$
$|D_\gamma \chi_E|$-a.e. in $X$. With this notation we have:
\begin{corollary}
\label{cor:product}
Let $E$ and $F$ be sets of finite perimeter with $\gamma(E\cap F)=0$. Then $E\cup F$ has finite perimeter,
\begin{equation}\label{tesi lemma unione}
\nu_{E\cup F}\Haus{\infty-1}\res (E\cup F)^{1/2}=
\nu_E\Haus{\infty-1}\res(E^{1/2}\setminus F^{1/2})+\nu_F\Haus{\infty-1}\res( F^{1/2}\setminus E^{1/2}),
\end{equation}
and $\nu_E(x)=-\nu_F(x)$ at $\Haus{\infty-1}$-a.e. $x\in E^{1/2}\cap F^{1/2}$.
\end{corollary}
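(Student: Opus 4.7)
The plan is to reduce the statement to Theorem~\ref{tessbou2} via the pointwise linearity of the Ornstein-Uhlenbeck semigroup and a disjointness observation on $E^{1/2}\cap F^{1/2}$.

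First I note that, since $\gamma(E\cap F)=0$, we have $\chi_{E\cup F}=\chi_E+\chi_F$ $\gamma$-a.e.; as the weak Gaussian derivative only sees the $\gamma$-equivalence class, this yields
\[
D_\gamma\chi_{E\cup F}=D_\gamma\chi_E+D_\gamma\chi_F,
\]
so $E\cup F$ has finite perimeter with $|D_\gamma\chi_{E\cup F}|\leq|D_\gamma\chi_E|+|D_\gamma\chi_F|$. Applying Theorem~\ref{tessbou1} to the three sets $E$, $F$, $E\cup F$ and extracting a common subsequence, I fix once and for all an infinitesimal $(t_i)$ with $\sum_i\sqrt{t_i}<\infty$ along which \eqref{hino9} holds simultaneously for all three. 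This fixes consistent notions of $E^{1/2}$, $F^{1/2}$, $(E\cup F)^{1/2}$.

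Next, because the Borel representative of $T_{t_i}\chi_A$ is given by Mehler's formula \eqref{mehler}, which is an integral against $\gamma$ and is therefore insensitive to modifications of the integrand on $\gamma$-null sets, the equality $\chi_{E\cup F}=\chi_E+\chi_F$ $\gamma$-a.e.\ upgrades to the genuinely pointwise identity $T_{t_i}\chi_{E\cup F}(x)=T_{t_i}\chi_E(x)+T_{t_i}\chi_F(x)$ for every $x\in X$. In particular, on $E^{1/2}\cap F^{1/2}$ we get $T_{t_i}\chi_{E\cup F}\to 1\neq 1/2$, so
\[
E^{1/2}\cap F^{1/2}\cap(E\cup F)^{1/2}=\emptyset.
\]

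With this disjointness in hand, applying Theorem~\ref{tessbou2} to each of $E$, $F$, $E\cup F$ rewrites the additivity $D_\gamma\chi_{E\cup F}=D_\gamma\chi_E+D_\gamma\chi_F$ as
\[
\nu_{E\cup F}\Haus{\infty-1}\res(E\cup F)^{1/2}=\nu_E\Haus{\infty-1}\res E^{1/2}+\nu_F\Haus{\infty-1}\res F^{1/2}.
\]
Splitting $E^{1/2}$ and $F^{1/2}$ along $E^{1/2}\cap F^{1/2}$ and using the previous step to discard the overlap on the left, I see that the overlap contribution on the right must vanish:
\[
(\nu_E+\nu_F)\Haus{\infty-1}\res(E^{1/2}\cap F^{1/2})=0.
\]
Since $|\nu_E|_H=|\nu_F|_H=1$ $\Haus{\infty-1}$-a.e.\ on the respective sets, this forces $\nu_E=-\nu_F$ $\Haus{\infty-1}$-a.e.\ on $E^{1/2}\cap F^{1/2}$, while the remaining two terms reproduce \eqref{tesi lemma unione}.

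The only genuinely delicate point is the pointwise, rather than merely $L^\infty(\gamma)$, reading of $T_{t_i}\chi_E$: it is crucial that adding $T_{t_i}\chi_E$ and $T_{t_i}\chi_F$ \emph{pointwise} is legitimate, because the conclusion must be evaluated at points of a set that is singular with respect to $\gamma$, where $\gamma$-a.e.\ information carries no content.
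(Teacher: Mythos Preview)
Your argument is correct and follows essentially the same route as the paper: use $\chi_{E\cup F}=\chi_E+\chi_F$ to get additivity of $D_\gamma$, invoke Theorem~\ref{tessbou2} on all three sets with a common sequence $(t_i)$, observe that $E^{1/2}\cap F^{1/2}$ is disjoint from $(E\cup F)^{1/2}$ via the pointwise additivity of Mehler's formula, and split accordingly. Your explicit extraction of a common $(t_i)$ and your emphasis on the genuinely pointwise (not merely $L^\infty(\gamma)$) reading of $T_{t_i}\chi_E$ make transparent a point the paper handles more tersely by appealing to $\Haus{\infty-1}$-uniqueness of the half-density sets.
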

An important feature in the above result is that, since $(E\cup
F)^{1/2}$, $E^{1/2}$, and $F^{1/2}$ are uniquely determined up to
$\Haus{\infty-1}$-negligible sets, one does not have to specify
which sequences $(t_i)$ one uses to define the sets (and the
sequences could all be different). On the other hand, if one would
try to deduce the analogous result stated in terms of cylindrical
boundaries, it seems to us that one would be obliged to choose the
same family ${\cal F}=\{F_n\}_{n\geq 1}$ for all the three sets (see
Remark~\ref{rcomparehino}).

Let us conclude this introduction pointing out that our results can
be considered as the analogous of Federer's result to an infinite
dimensional setting. In \cite[Section 7]{AMP}, the authors gave a
list of some open problems related to the rectifiability result, and
gave potential alternative definitions of essential and reduced
boundary. As we will show in the appendix, the approach used in
Proposition~\ref{pweakstar} to prove the weak$^*$ convergence of
$T_t\chi_E$ to $1/2$ in $L^\infty(X,|D_\gamma\chi_E|)$ is flexible
enough to give a ``weak form'' of the fact that $|D_\gamma\chi_E|$
is concentrated also on a kind of reduced boundary. Apart from this,
many other natural questions remain open. In particular, the main
open problem is still to find some analogous of De Giorgi's blow-up
theorem (i.e., understanding in which sense, for
$|D_\gamma\chi_E|$-a.e. $x \in X$, the blow-up of $E$ around $x$
converges to an half-space, see the proof of
Proposition~\ref{findim1}).

\section{Notation and preliminary results}

We assume that $(X,\|\cdot\|)$ is a separable Banach space and
$\gamma$ is a Gaussian probability measure on the Borel
$\sigma$-algebra of $X$. We shall always assume that $\gamma$ is
nondegenerate (i.e., all closed proper subspaces of $X$ are
$\gamma$-negligible) and centered (i.e., $\int_X x\,d\gamma=0$). We
denote by $H$ the Cameron-Martin subspace of $X$, that is
$$
H:=\left\{\int_X f(x)x\,d\gamma(x):f\in L^2(X,\gamma)\right\},
$$
and, for $h\in H$,
we denote by $\hat{h}$ the corresponding element in $L^2(X,\gamma)$;
it can be characterized as the Fomin derivative of $\gamma$ along
$h$, namely
\begin{equation}\label{Fomin}
\int_X\partial_h\phi\,d\gamma=-\int_X\hat{h}\phi\,d\gamma
\end{equation}
for all $\phi\in C^1_b(X)$. Here and in the sequel $C^1_b(X)$
denotes the space of continuously differentiable cylindrical
functions in $X$, bounded and with a bounded gradient. The space $H$
can be endowed with an Hilbertian norm $|\cdot |_H$ that makes the
map $h\mapsto\hat{h}$ an isometry; furthermore, the injection of
$(H,|\cdot |_H)$ into $(X,\|\cdot\|)$ is compact.

We shall denote by $\tilde{H}\subset H$ the subset of vectors of the form
\begin{equation}\label{defhstar}
\int_X \langle x^*,x\rangle x\,d\gamma(x),\qquad x^*\in X^*.
\end{equation}
This is a dense (even w.r.t. to the Hilbertian norm) subspace of
$H$. Furthermore, for $h\in H^*$ the function $\hat{h}(x)$ is
precisely $\langle x^*,x\rangle$ (and so, it is continuous).

Given a $m$-dimensional subspace $F\subset \tilde{H}$ we shall frequently
consider an orthonormal basis $\{h_1,\ldots,h_m\}$ of $F$ and the
factorization $X=F\oplus Y$, where $Y$ is the kernel of the
continuous linear map
\begin{equation}\label{ammiss1}
x\in X\mapsto \Pi_F(x):=\sum_{i=1}^m\hat{h}_i(x)h_i\in F.
\end{equation}
The decomposition $x=\Pi_F(x)+(x-\Pi_F(x))$ is well defined, thanks
to the fact that $\Pi_F\circ \Pi_F=\Pi_F$ and so $x-\Pi_F(x)\in Y$;
in turn this follows by
$\hat{h}_i(h_j)=\langle\hat{h}_i,\hat{h}_j\rangle_{L^2}=\delta_{ij}$.

Thanks to the fact that $|h_i|_H=1$, this induces a factorization
$\gamma=\gamma_F\otimes\gamma_Y$, with $\gamma_F$ the standard Gaussian
in $F$ (endowed with the metric inherited from $H$) and $\gamma_Y$
Gaussian in $(Y,\|\cdot\|)$. Furthermore, the orthogonal complement
$F^\perp$ of $F$ in $H$ is the Cameron-Martin space of
$(Y,\gamma_Y)$.

\subsection{$BV$ functions and Sobolev spaces}

Here we present the definitions of Sobolev and $BV$ spaces. Since we
will consider bounded functions only, we shall restrict to this
class for ease of exposition.

Let $u:X\to\R$ be a bounded Borel function. Motivated by
\eqref{Fomin}, we say that $u\in W^{1,1}(X,\gamma)$ if there exists
a (unique) $H$-valued function, denoted by $\nabla u$, with $|\nabla
u|_H\in L^1(X,\gamma)$ and
$$
\int_X u\partial_h\phi\,d\gamma=-\int_X \phi\langle\nabla
u,h\rangle_H\,d\gamma+\int_X u\phi\hat{h}\,d\gamma
$$
for all $\phi\in C^1_b(X)$ and $h\in H$.

Analogously, following \cite{fuk2000_1,fuk2000_2}, we say that $u\in
BV(X,\gamma)$ if there exists a (unique) $H$-valued Borel measure
$D_\gamma u$ with finite total variation in $X$ satisfying
$$
\int_X u\partial_h\phi\,d\gamma=-\int_X \phi\,d\langle D_\gamma
u,h\rangle_H+\int_X u\phi\hat{h}\,d\gamma
$$
for all $\phi\in C^1_b(X)$ and $h\in H$.

In the sequel, shall mostly consider
the case when $u=\chi_E:X\to\{0,1\}$ is the characteristic function
of a set $E$, although some statements are more natural in the
general $BV$ context. Notice the inclusion $W^{1,1}(X,\gamma)\subset
BV(X,\gamma)$, given by the identity $D_\gamma u=\nabla u\gamma$.

\subsection{The OU semigroup and Mehler's formula}

In this paper, the Ornstein-Uhlenbeck semigroup $T_tf$ will always
be understood as defined by the \emph{pointwise} formula
\begin{equation}\label{mehler}
T_tf(x):=\int_X f(e^{-t}x+\sqrt{1-e^{-2t}}y)\,d\gamma(y)
\end{equation}
which makes sense whenever $f$ is bounded and Borel. This convention
will be important when integrating $T_t f$ against potentially
singular measures, see for instance \eqref{product}.

We shall also use the dual OU semigroup $T_t^*$, mapping signed
measures into signed measures, defined by the formula
\begin{equation}
\langle T_t^*\mu,\phi\rangle:=\int_X T_t\phi\,d\mu\qquad\text{$\phi$
bounded Borel.}
\end{equation}

In the next proposition we collect a few properties of the OU
semigroup needed in the sequel (see for instance \cite{boga} for the
Sobolev case and \cite{AMP} for the $BV$ case).

\begin{proposition}\label{pammiss1} Let $u:X\to\R$ be bounded and Borel and $t>0$.
Then $T_tu\in W^{1,1}(X,\gamma)$ and:
\begin{itemize}
\item[(a)] if $u\in W^{1,1}(X,\gamma)$ then, componentwise, it holds $\nabla T_tu=e^{-t}T_t\nabla u$;
\item[(b)] if $u\in BV(X,\gamma)$ then, componentwise, it holds $\nabla T_tu\gamma=e^{-t}T_t^*(D_\gamma
u)$.
\end{itemize}
\end{proposition}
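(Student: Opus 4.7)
The proof rests on a direct pointwise representation of $\partial_h T_t u$ obtained from Mehler's formula~\eqref{mehler}. The plan is to compute $T_t u(x+\epsilon h)$ for $h\in H$ by shifting the integration variable $y$ by $-\epsilon e^{-t}(1-e^{-2t})^{-1/2} h$ and applying the Cameron--Martin formula to produce an exponential weight depending smoothly on $\epsilon$. Differentiating at $\epsilon=0$ yields the explicit pointwise identity
\begin{equation*}
\partial_h T_t u(x)=\pm\frac{e^{-t}}{\sqrt{1-e^{-2t}}}\int_X u\bigl(e^{-t}x+\sqrt{1-e^{-2t}}\,y\bigr)\,\hat h(y)\,d\gamma(y),
\end{equation*}
valid for any bounded Borel $u$ (the sign depending on the conventions). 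Since $\|\hat h\|_{L^2(\gamma)}=|h|_H$, this derivative is linear and bounded in $h$, which allows one to define a bounded $H$-valued Borel map $\nabla T_t u$; a Fubini argument together with Fomin's formula~\eqref{Fomin} then verifies the integration by parts identity defining $W^{1,1}(X,\gamma)$.

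Part~(a) follows by transferring the $\hat h$ factor onto $u$: assuming $u\in W^{1,1}$, apply Fomin's formula in the inner $y$-integral, using the Sobolev chain rule for $y\mapsto u(e^{-t}x+\sqrt{1-e^{-2t}}\,y)$ (whose Cameron--Martin gradient along $h$ is $\sqrt{1-e^{-2t}}\langle\nabla u,h\rangle_H$). The prefactor $\sqrt{1-e^{-2t}}$ cancels with $(1-e^{-2t})^{-1/2}$, leaving $\partial_h T_t u=e^{-t}T_t\langle\nabla u,h\rangle_H$ componentwise.

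For part~(b) no chain rule on $u$ is available, so I would dualize. Multiply the pointwise formula by $\phi\in C^1_b(X)$, integrate in $x$ against $\gamma$, and exploit the rotation invariance of $\gamma\otimes\gamma$ under the orthogonal map
\begin{equation*}
(x,y)\mapsto\bigl(e^{-t}x+\sqrt{1-e^{-2t}}\,y,\;-\sqrt{1-e^{-2t}}\,x+e^{-t}y\bigr),
\end{equation*}
which is the measure-theoretic form of Mehler's identity. After this change of variables, the integrand rearranges so that $\phi$ is replaced by $T_t\phi$ while $u$ is evaluated in its original variable; collecting the resulting terms reproduces, up to the factor $e^{-t}$, the right-hand side of the $BV$ defining identity for $u$ applied to the test function $T_t\phi\in C^1_b(X)$. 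This yields
\begin{equation*}
\int_X \phi\,\langle\nabla T_t u,h\rangle_H\,d\gamma=e^{-t}\int_X T_t\phi\,d\langle D_\gamma u,h\rangle_H\qquad\forall\,\phi\in C^1_b(X),\ h\in H,
\end{equation*}
which is exactly the componentwise statement $\nabla T_t u\,\gamma=e^{-t}T_t^*(D_\gamma u)$.

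The main obstacle is the bookkeeping in part~(b): matching signs and coefficients arising from the orthogonal change of variables, handling the fact that $\hat h$ is only a measurable (not continuous) linear functional when $h\in H\setminus\tilde H$ (handled by density of $\tilde H$ in $H$ and $L^2(\gamma)$-approximation of $\hat h$), and verifying that $T_t\phi$ is an admissible $C^1_b$ test function for the $BV$ definition, which is guaranteed by applying the first step of the argument to the bounded Borel function $\phi$. None of these steps is deep in isolation, but the proof requires care to keep the Gaussian identities consistent.
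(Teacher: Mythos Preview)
The paper does not actually prove this proposition: it is stated as a collection of known facts with references to \cite{boga} for the Sobolev case and \cite{AMP} for the $BV$ case, so there is no in-paper proof to compare against.

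That said, your outline is sound and follows the standard route. The pointwise derivative formula you derive from Mehler's formula via the Cameron--Martin shift is exactly the one the paper quotes (without proof) in the argument for Proposition~\ref{pbogaregu}, and the Gaussian rotation you invoke for part~(b) is the same device used in the proof of Lemma~\ref{lpoincare}. Your handling of the technical points (density of $\tilde H$ in $H$ to reduce to continuous $\hat h$, and the fact that $T_t\phi$ is admissible as a $C^1_b$ test function) is appropriate. One small caution on part~(a): the step ``apply Fomin's formula in the inner $y$-integral using the Sobolev chain rule for $y\mapsto u(e^{-t}x+\sqrt{1-e^{-2t}}y)$'' presupposes that this shifted/rescaled function lies in $W^{1,1}(X,\gamma)$ for $\gamma$-a.e.\ $x$, which is not entirely immediate from $u\in W^{1,1}(X,\gamma)$; it is cleanest to first establish the identity for cylindrical $u$ and then pass to the limit using the closability of the gradient, rather than arguing pointwise in $x$. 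With that adjustment the argument goes through.
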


The next result is basically contained in
\cite[Proposition~5.4.8]{boga}, we state and prove it because we
want to emphasize that the regular version of the restriction of
$T_tf$ to $y+F$, $y\in Y$, provided by the Proposition, is for
$\gamma_Y$-a.e. $y$ precisely the one pointwise defined in Mehler's
formula.

\begin{proposition} \label{pbogaregu} Let $u$ be a bounded Borel function and $t>0$.
With the above notation, for $\gamma_Y$-a.e. $y\in Y$ the map
$z\mapsto T_t u(z,y)$ is smooth in $F$.
\end{proposition}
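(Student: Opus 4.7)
The idea is to unwind Mehler's formula \eqref{mehler} against the product decomposition $\gamma=\gamma_F\otimes\gamma_Y$ and recognize what is left as a finite-dimensional Gaussian convolution in $F$; smoothness in the $F$-variable then follows by differentiating under the integral sign. I do not expect any substantial obstacle: the boundedness of $u$ together with the nonvanishing of $\beta:=\sqrt{1-e^{-2t}}$ for $t>0$ are all that is needed, and the conclusion will actually hold for every $w\in Y$ (the a.e.\ statement then serves to identify the smooth slice-version with the pointwise Mehler formula).

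Concretely, fix $t>0$, set $\alpha:=e^{-t}$, and write every $x\in X$ as $x=z+w$ with $z\in F$ and $w\in Y$. Since $u$ is bounded and Borel, Fubini applied to Mehler's formula yields, for every $(z,w)\in F\oplus Y$,
\begin{equation*}
T_t u(z,w)=\int_F\!\int_Y u(\alpha z+\beta z',\,\alpha w+\beta w')\,d\gamma_Y(w')\,d\gamma_F(z').
\end{equation*}
For each fixed $w\in Y$, the inner integral defines a bounded Borel function on $F$,
\begin{equation*}
g_w(\xi):=\int_Y u(\xi,\,\alpha w+\beta w')\,d\gamma_Y(w'),\qquad \|g_w\|_\infty\leq\|u\|_\infty,
\end{equation*}
so that $T_t u(z,w)=\int_F g_w(\alpha z+\beta z')\,d\gamma_F(z')$.

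Next, recalling that $\gamma_F$ is standard Gaussian on the $m$-dimensional Hilbert space $F$, the change of variable $\eta=\alpha z+\beta z'$ rewrites $T_tu(\cdot,w)$ as the convolution of $g_w$ with the smooth Gaussian kernel
\begin{equation*}
K_t(z,\eta):=(2\pi\beta^2)^{-m/2}\exp\!\bigl(-|\eta-\alpha z|_H^2/(2\beta^2)\bigr)
\end{equation*}
against Lebesgue measure on $F$. The partial derivatives of $K_t(\cdot,\eta)$ of any order are products of polynomials in $\eta-\alpha z$ with $K_t$, hence locally uniformly bounded in $z$ by integrable Gaussian expressions in $\eta$; combined with the boundedness of $g_w$, this justifies differentiation under the integral sign to all orders, and shows that $z\mapsto T_tu(z,w)$ is $C^\infty$ on $F$ for every $w\in Y$ (a fortiori for $\gamma_Y$-a.e.\ $w$), as claimed.
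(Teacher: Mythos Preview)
Your argument is correct. By writing Mehler's formula through the product decomposition $\gamma=\gamma_F\otimes\gamma_Y$ you reduce $z\mapsto T_tu(z,w)$, for \emph{every} fixed $w\in Y$, to a convolution of the bounded Borel function $g_w$ with a nondegenerate Gaussian on the finite-dimensional space $F$; the domination needed to differentiate under the integral sign (polynomial times Gaussian) is standard, and you obtain $C^\infty$ regularity of the slice for every $w$, which is stronger than the $\gamma_Y$-a.e.\ statement.

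The paper takes a different route. It works with the global (weak) derivative formula
\[
\partial_h T_tu(x)=\frac{e^{-t}}{\sqrt{1-e^{-2t}}}\int_X u(e^{-t}x+\sqrt{1-e^{-2t}}y)\,\hat h(y)\,d\gamma(y),
\]
observes that this is a classical derivative when $u$ is cylindrical, extracts the uniform bound $\sup|\partial_h T_tu|\le c(t)|h|_H\sup|u|$, and then runs a monotone class argument (the class of $u$'s for which the slice is Lipschitz with this bound is closed under pointwise equibounded limits). The paper explicitly contents itself with Lipschitz continuity, remarking that this is the only property actually used later. Your approach is more direct and delivers the full smoothness claimed in the statement without any approximation or monotone class step; the paper's approach, on the other hand, produces the explicit derivative formula in $X$ (tied to Proposition~\ref{pammiss1}) rather than only a slice-wise one.
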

\begin{proof} Let us prove, for the sake of simplicity,
Lipschitz continuity (in fact, the only property we shall need) for
$\gamma_Y$-a.e. $y$, with a bound on the Lipschitz constant
depending only on $t$ and on the supremum of $|u|$. We use the
formula
$$
\partial_h T_tu(x)=\frac{e^{-t}}{\sqrt{1-e^{-2t}}}\int_X
u(e^{-t}x+\sqrt{1-e^{-2t}}y)\hat{h}(y)\,d\gamma(y)\qquad h\in H
$$
for the weak derivative and notice that, if $u$ is cylindrical, this
provides also the classical derivative. On the other hand, the
formula provides also the uniform bound $\sup |\partial_hT_tu|\leq
c(t)|h|_H\sup|u|$. The uniform bound and Fubini's theorem ensure
that the class of functions for which the stated property is true
contains all cylindrical functions and it stable under pointwise
equibounded limits. By the monotone class theorem, the stated
property holds for all bounded Borel functions.\end{proof}

The next lemma provides a rate of convergence of $T_t u$ to $u$ when
$u$ belongs to $BV(X,\gamma)$; the proof follows the lines of the
proof of Poincar\'e inequalities, see \cite[Theorem~5.5.11]{boga}.

\begin{lemma}\label{lpoincare} Let $u\in BV(X,\gamma)$. Then
$$
\int_X|T_tu-u|\,d\gamma\leq c_t|D_\gamma u|(X)
$$
with
$c_t:=\sqrt{\frac{2}{\pi}}\int_0^t\frac{e^{-s}}{\sqrt{1-e^{-2s}}}\,ds$,
$c_t\sim 2\sqrt{t/\pi}$ as $t\downarrow 0$.
\end{lemma}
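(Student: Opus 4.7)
My plan is to prove the inequality first for smooth cylindrical $u$ by combining Mehler's formula \eqref{mehler} with the rotation invariance of $\gamma\otimes\gamma$, and then to extend to general $u\in BV(X,\gamma)$ by approximation. Writing $\cos\theta_t=e^{-t}$ and $\sin\theta_t=\sqrt{1-e^{-2t}}$, one has
\begin{equation*}
T_tu(x)-u(x)=\int_X\bigl[u(\cos\theta_t\,x+\sin\theta_t\,y)-u(x)\bigr]\,d\gamma(y).
\end{equation*}
For $u$ of the form $u(x)=f(\hat h_1(x),\dots,\hat h_N(x))$ with $f\in C^1_b(\R^N)$ and orthonormal $h_1,\dots,h_N\in\tilde H$, the directional derivative $\partial_v u(z)$ is well defined for every $v\in X$ because the $\hat h_i$ lie in $X^*$; thus the fundamental theorem of calculus along the rotated path $z(\varphi):=\cos\varphi\,x+\sin\varphi\,y$ yields
\begin{equation*}
u(\cos\theta_t x+\sin\theta_t y)-u(x)=\int_0^{\theta_t}\partial_{v(\varphi)}u(z(\varphi))\,d\varphi,\qquad v(\varphi):=-\sin\varphi\,x+\cos\varphi\,y.
\end{equation*}

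The crucial observation is that for every $\varphi$ the rotation $(x,y)\mapsto(z(\varphi),v(\varphi))$ preserves $\gamma\otimes\gamma$, so taking absolute values and applying Fubini gives
\begin{equation*}
\int_X|T_tu-u|\,d\gamma\le\int_0^{\theta_t}\biggl(\int_X\int_X|\partial_v u(z)|\,d\gamma(v)\,d\gamma(z)\biggr)d\varphi.
\end{equation*}
For fixed $z$ the map $v\mapsto\partial_v u(z)=\sum_i\hat h_i(v)\partial_if(\hat h_1(z),\dots)$ is a centered Gaussian functional on $X$ with $L^2(\gamma)$-norm equal to $|\nabla u(z)|_H$, hence $\int_X|\partial_v u(z)|\,d\gamma(v)=\sqrt{2/\pi}\,|\nabla u(z)|_H$. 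Differentiating $\cos\theta_t=e^{-t}$ gives $d\theta_t/dt=e^{-t}/\sqrt{1-e^{-2t}}$, so $\theta_t=\int_0^t e^{-s}/\sqrt{1-e^{-2s}}\,ds$, and I recover $\int_X|T_tu-u|\,d\gamma\le c_t\int_X|\nabla u|_H\,d\gamma$ with exactly the stated constant; the asymptotic $c_t\sim 2\sqrt{t/\pi}$ follows from $e^{-s}/\sqrt{1-e^{-2s}}\sim 1/\sqrt{2s}$ at the origin.

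For a general $u\in BV(X,\gamma)$ I use a density argument: there exist smooth cylindrical $u_n$ with $u_n\to u$ in $L^1(\gamma)$ and $\int_X|\nabla u_n|_H\,d\gamma\to|D_\gamma u|(X)$. Such sequences can be produced by mollifying with $T_{1/n}$ (which, by Proposition~\ref{pammiss1}(b), produces $W^{1,1}$ functions with total variation bounded by $|D_\gamma u|(X)$), then conditioning on a growing family of cylindrical sub-$\sigma$-algebras and extracting diagonally; lower semicontinuity of the total variation upgrades equiboundedness to convergence. Since $T_t$ is $L^1(\gamma)$-contractive, the cylindrical estimate passes to the limit. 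The principal technical obstacle is precisely this density/energy-convergence step, which requires care in the Wiener setting; the rotation computation itself is essentially automatic once one restricts to cylindrical smooth functions.
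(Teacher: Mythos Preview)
Your proof is correct and follows essentially the same route as the paper: fundamental theorem of calculus along the Gaussian rotation path, invariance of $\gamma\otimes\gamma$ under the rotation $(x,y)\mapsto(z,v)$, the identity $\int_X|\partial_v u(z)|\,d\gamma(v)=\sqrt{2/\pi}\,|\nabla u(z)|_H$, and cylindrical approximation to pass to general $BV$. The only cosmetic difference is that you parametrize the path by the angle $\varphi\in[0,\theta_t]$ whereas the paper parametrizes by $\tau\in[0,1]$ with $s=t\tau$ (related via $\cos\varphi=e^{-s}$); the resulting constant $c_t$ is of course the same.
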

\begin{proof} It obviously suffices to bound with
$c_t|D_\gamma u|(X)$ the expression
\begin{equation}\label{hino11}
\int_X\int_X
|u(x)-u(e^{-t}x+\sqrt{1-e^{-2t}}y)|\,d\gamma(x)d\gamma(y).
\end{equation}
Standard cylindrical approximation arguments reduce the proof to the
case when $u$ is smooth, $X$ is finite-dimensional and $\gamma$ is
the standard Gaussian. Since
\begin{eqnarray*}
u(e^{-t}x+\sqrt{1-e^{-2t}}y)-u(x)&=&\int_0^1\frac{d}{d\tau}
u(e^{-t\tau}x+\sqrt{1-e^{-2t\tau}}y)\,d\tau\\&=& t\int_0^1 \nabla
(e^{-t\tau}x+\sqrt{1-e^{-2t\tau}}y)\cdot \biggl(-e^{-t\tau} x+
\frac{e^{-2t\tau}y}{\sqrt{1-e^{-2t\tau}}}\biggr)\,d\tau
\end{eqnarray*}
we can estimate the expression in \eqref{hino11} with
$$
t\int_0^1\frac{e^{-t\tau}}{\sqrt{1-e^{-2t\tau}}}\int_X\int_X|\nabla
u(e^{-t\tau}x+\sqrt{1-e^{-2t\tau}}y)\cdot (-\sqrt{1-e^{-2t\tau}} x+
e^{-t\tau}y)|\,d\gamma(x)d\gamma(y)d\tau.
$$
Now, for $\tau$ fixed we can perform the ``Gaussian rotation''
$$
(x,y)\mapsto\bigl(e^{-t\tau}x+\sqrt{1-e^{-2t\tau}}y,-\sqrt{1-e^{-2t\tau}}
x+ e^{-t\tau}y\bigr)
$$
to get
$$
t\int_0^1\frac{e^{-t\tau}}{\sqrt{1-e^{-2t\tau}}}\int_X\int_X|\nabla
u(v)\cdot w|\,d\gamma(w)d\gamma(v)d\tau.
$$
Eventually we use the fact that $\int_X|\xi\cdot
w|\,d\gamma(w)=\sqrt{2/\pi}|\xi|$ to get
$$
t\sqrt{\frac{2}{\pi}}\int_0^1\frac{e^{-t\tau}}{\sqrt{1-e^{-2t\tau}}}\,d\tau
\int_X|\nabla u|(v)\,d\gamma(v).
$$
A change of variables leads to the desired expression of
$c_t$.\end{proof}

Notice that the proof of the lemma provides the slightly stronger
information
\begin{equation}\label{poincarestr}
\int_X\int_X
|u(x)-u(e^{-t}x+\sqrt{1-e^{-2t}}y)|\,d\gamma(x)d\gamma(y)\leq c_t
|D_\gamma u|(X).
\end{equation}
This more precise formulation will be crucial in the proof of
Proposition~\ref{papriori14}.

\subsection{Product rule}

In the proof of Proposition~\ref{pweakstar} we shall use the product
rule
$$
D_\gamma (\chi_E v)=\chi_E\nabla v\gamma +v D_\gamma\chi_E
$$
for $v\in W^{1,1}(X,\gamma)$ and $E$ with finite perimeter. In
general the proof of this property is delicate, even in
finite-dimensional spaces, since a precise representative of $v$
should be used to make sense of the product $vD_\gamma\chi_E$.
However, in the special case when $v=T_tf$ with $t>0$ and $f$
bounded Borel, the product rule, namely
\begin{equation}\label{product}
D_\gamma (\chi_E T_tf)=\chi_E\nabla T_tf\gamma +T_tf D_\gamma\chi_E.
\end{equation}
holds provided we understand $T_tf$ as pointwise defined in Mehler's
formula. The argument goes by pointwise approximation by better
maps, very much as in Proposition~\ref{pbogaregu}, and we shall not
repeat it.

\subsection{Factorization of $T_t$ and $D_\gamma u$}

Let us consider the decomposition $X=F\oplus Y$, with $F\subset\tilde{H}$
finite-dimensional. Denoting by $T_t^F$ and $T_t^Y$ the OU
semigroups in $F$ and $Y$ respectively, it is easy to check (for
instance first on products of cylindrical functions on $F$ and $Y$,
and then by linearity and density) that also the action of $T_t$ can
be ``factorized'' in the coordinates $x=(z,y)\in F\times Y$ as
follows:
\begin{equation}\label{factorization}
T_tf(z,y)=T_t^Y\bigl(w\mapsto T_t^F f(\cdot,w)(z)\bigr)(y)
\end{equation}
for any bounded Borel function $f$.

Let us discuss, now, the factorization properties of $D_\gamma u$.
Let us write $D_\gamma u=\nu_u|D_\gamma u|$ with $\nu_u:X\to H$ Borel
vectorfield with $|\nu_u|_H=1$ $|D_\gamma u|$-a.e. Moreover, given a Borel set $B$, define
$$
B_y:=\left\{z\in F:\ (z,y)\in B\right\},\qquad B_z:=\left\{y\in Y:\
(z,y)\in B\right\}.
$$
The identity
\begin{equation}\label{hino440}
\int_B|\pi_F(\nu_u)|\,d|D_\gamma
u|=\int_Y|D_{\gamma_F}u(\cdot,y)|(B_y)\,d\gamma_Y(y) \qquad\text{$B$
Borel}
\end{equation}
is proved in \cite[Theorem~44.2]{AMP} (see also \cite{AMMP,hin09set}
for analogous results), where $\pi_F:H\to F$ is the orthogonal
projection. Along the similar lines, one can also show the identity
\begin{equation}\label{hino441}
\int_B|\pi_{F^\perp}(\nu_u)|\,d|D_\gamma
u|=\int_F|D_{\gamma_Y}u(z,\cdot)|(B_z)\,d\gamma_F(z) \qquad\text{$B$
Borel}
\end{equation}
with $\pi_F+\pi_{F^\perp}={\rm Id}$.
In the particular case $u=\chi_E$, with the notation
\begin{equation}\label{hino16}
E_y:=\left\{z\in F:\ (z,y)\in E\right\},\qquad E_z:=\left\{y\in Y:\
(z,y)\in E\right\}
\end{equation}
the identities \eqref{hino440} and \eqref{hino441} read respectively
as
\begin{equation}\label{hino40}
\int_B|\pi_F(\nu_E)|\,d|D_\gamma
\chi_E|=\int_Y|D_{\gamma_F}\chi_{E_y}|(B_y)\,d\gamma_Y(y)
\qquad\text{$B$ Borel,}
\end{equation}
\begin{equation}\label{hino41}
\int_B|\pi_{F^\perp}(\nu_E)|\,d|D_\gamma
\chi_E|=\int_F|D_{\gamma_Y}\chi_{E_z}|(B_z)\,d\gamma_F(z)
\qquad\text{$B$ Borel}
\end{equation}
with $D_\gamma\chi_E=\nu_E|D_\gamma\chi_E|$.

\begin{remark}\label{rtoomuch}{\rm Having in mind \eqref{hino40} and \eqref{hino41},
it is tempting to think that the formula holds for any orthogonal
decomposition of $H$ (so, not only when $F\subset\tilde{H}$), or even when none of the parts if
finite-dimensional. In order to avoid merely technical complications
we shall not treat this issue here because, in this more general
situation, the ``projection maps'' $x\mapsto y$ and $x\mapsto z$ are
no longer continuous. The problem can be solved removing sets of
small capacity, see for instance \cite{feydelpra} for a more
detailed discussion.}
\end{remark}

As a corollary of the above formulas, we can prove the following
important semicontinuity result for open sets:
\begin{proposition}
\label{prop:sci} For any open set $A \subset X$ the map
$$
u \mapsto |D_\gamma u|(A)
$$
is lower semicontinuous in $BV(X;\gamma)$ with respect to the
$L^1(X,\gamma)$ convergence.
\end{proposition}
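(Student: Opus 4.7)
The plan is to exploit the factorization identity \eqref{hino440} to reduce the statement to the well-known finite-dimensional Gaussian lower semicontinuity on almost every slice, and then to recover $|D_\gamma u|(A)$ by letting the finite-dimensional subspace grow to a dense subspace of $H$.

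Suppose $u_n\to u$ in $L^1(X,\gamma)$ with $u_n,u\in BV(X,\gamma)$ and set $L:=\liminf_n|D_\gamma u_n|(A)$; assume $L<\infty$ and pass to a subsequence realizing the liminf. Fix a finite-dimensional subspace $F\subset\tilde{H}$ with orthonormal basis $\{h_1,\ldots,h_m\}$ and the associated decomposition $X=F\oplus Y$, $\gamma=\gamma_F\otimes\gamma_Y$. Applying \eqref{hino440} with $B=A$ to $u_n$ and $u$ gives
$$
\int_A|\pi_F(\nu_{u_n})|_H\,d|D_\gamma u_n|=\int_Y|D_{\gamma_F}u_n(\cdot,y)|(A_y)\,d\gamma_Y(y),
$$
and the analogous identity for $u$, while $|\pi_F(\nu_{u_n})|_H\le 1$ ensures that the left-hand side is bounded by $|D_\gamma u_n|(A)$.

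By Fubini, $L^1(X,\gamma)$-convergence yields $\int_Y\|u_n(\cdot,y)-u(\cdot,y)\|_{L^1(F,\gamma_F)}\,d\gamma_Y(y)\to 0$, so up to a further subsequence $u_n(\cdot,y)\to u(\cdot,y)$ in $L^1(F,\gamma_F)$ for $\gamma_Y$-a.e. $y$. Since $A$ is open, each slice $A_y=\{z\in F:(z,y)\in A\}$ is open in $F$, and the classical finite-dimensional Gaussian $BV$ lower semicontinuity (proved via duality with cylindrical divergence-type test fields compactly supported in $A_y$, where the Gaussian weight is bounded away from $0$ and $\infty$) yields
$$
|D_{\gamma_F}u(\cdot,y)|(A_y)\le\liminf_n|D_{\gamma_F}u_n(\cdot,y)|(A_y)\qquad\text{for }\gamma_Y\text{-a.e. }y.
$$
Integrating in $y$ and applying Fatou's lemma together with the bound above,
$$
\int_Y|D_{\gamma_F}u(\cdot,y)|(A_y)\,d\gamma_Y(y)\le\liminf_n\int_Y|D_{\gamma_F}u_n(\cdot,y)|(A_y)\,d\gamma_Y(y)\le L.
$$

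To conclude, pick an increasing sequence of finite-dimensional subspaces $F_k\subset\tilde{H}$ whose union is dense in $H$. For $|D_\gamma u|$-a.e. $x$ one has $|\nu_u(x)|_H=1$, and since orthogonal projections onto an increasing family of closed subspaces with dense union converge strongly to the identity, $|\pi_{F_k}(\nu_u(x))|_H\uparrow 1$. By monotone convergence combined with the step just proved,
$$
|D_\gamma u|(A)=\lim_k\int_A|\pi_{F_k}(\nu_u)|_H\,d|D_\gamma u|=\lim_k\int_Y|D_{\gamma_{F_k}}u(\cdot,y)|(A_y)\,d\gamma_Y(y)\le L.
$$
The main technical point is the extraction of a subsequence so that slice convergence $u_n(\cdot,y)\to u(\cdot,y)$ in $L^1(F,\gamma_F)$ holds for a.e. $y$; this is harmless because we are only after an inequality on the liminf. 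The finite-dimensional LSC on slices and the density of $\bigcup_kF_k$ in $H$ are then the only genuinely analytic ingredients.
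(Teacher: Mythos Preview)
Your argument is correct and follows essentially the same route as the paper's proof: reduce to finite-dimensional slices via the factorization identity \eqref{hino440}, apply the classical finite-dimensional lower semicontinuity on $\gamma_Y$-a.e. slice together with Fatou, and then let $F$ increase to a dense subspace using monotone convergence on $|\pi_F(\nu_u)|_H\uparrow 1$. The only cosmetic difference is that the paper handles the subsequence issue by assuming at the outset that $\sum_k\|u_k-u\|_{L^1(\gamma)}<\infty$ (which yields slice convergence for every $F$ without further extractions), whereas you extract an $F$-dependent subsequence each time; since the resulting bound $\int_A|\pi_F(\nu_u)|_H\,d|D_\gamma u|\le L$ is independent of that subsequence, both organizations are equivalent.
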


\begin{proof}
Let $u_k \to u$ in $L^1(X,\gamma)$.
It suffices to prove the result under the additional assumption that
\begin{equation}
\label{eq:fast L1}
\sum_k \int_X |u_k - u|\,d\gamma<\infty.
\end{equation}
Let $F\subset \tilde H$ be a finite dimensional subspace,
let $X=F\times Y$ be the associated factorization, and use coordinates $x=(z,y) \in F\times Y$ as before.

Thanks to \eqref{eq:fast L1} and Fubini's theorem, $u_k(\cdot,y)\to u(\cdot,y)$ in $L^1(F,\gamma_{F})$
for $\gamma_{Y}$-a.e. $y \in Y$.
Hence, by the lower semicontinuity of the total variation in finite dimensional spaces
(see for instance \cite[Remark 3.5]{afp} for a proof when $\gamma_F$ is replaced by the Lebesgue
measure) we obtain
$$
|D_{\gamma_{F}} u(\cdot,y)|(A_{y}) \leq \liminf_{k\to\infty}
|D_{\gamma_{F}} u_k(\cdot,y)|(A_{y}) \qquad \text{for $\gamma_{Y}$-a.e. $y \in Y$,}
$$
where $A_{y}:=\left\{z\in F:\ (z,y)\in A\right\}$.
Integrating with respect to $\gamma_{Y}$ and using Fatou's lemma we get
$$
\int_{Y}|D_{\gamma_{F}} u(\cdot,y)|(A_{y})\,d\gamma_{Y_n} \leq
\liminf_{k\to\infty} \int_{Y} |D_{\gamma_{F}} u_k(\cdot,y)|(A_{y})\,d\gamma_{Y},
$$
which together with \eqref{hino440} gives
$$
\int_A|\pi_{F}(\nu_u)|\,d|D_\gamma u| \leq \liminf_{k\to \infty}
\int_A|\pi_{F}(\nu_u)|\,d|D_\gamma u_k| \leq \liminf_{k\to \infty}
|D_\gamma u_k|(A)
$$
(recall that $|\nu_u|_H=1$). Since $|\pi_F(\nu_u)| \uparrow 1$ as
$F$ increases to a dense subspace of $H$, we conclude by the
monotone convergence theorem.
\end{proof}

\subsection{Finite-codimension Hausdorff measures}

We start by introducing, following \cite{feydelpra}, pre-Hausdorff
measures which, roughly speaking, play the same role of the
pre-Hausdorff measures $\Haus{n}_\delta$ in the finite-dimensional
theory.

Let $F\subset\tilde{H}$ be finite-dimensional, $m\geq k\geq 0$ and, with
the notation of the previous section, define
\begin{equation}\label{feydel}
\Haus{\infty-k}_F(B):=\int_Y\int_{B_y}
G_m\,d\Haus{m-k}\,d\gamma_Y(y) \qquad\text{$B$ Borel}
\end{equation}
where $m={\rm dim}(F)$ and $G_m$ is the standard Gaussian density in
$F$ (so that $\Haus{\infty-0}_F=\gamma$). It is proved in
\cite{feydelpra} that $y\mapsto\int_{B_y} G_m\,d\Haus{m-k}$ is
$\gamma_Y$-measurable whenever $B$ is Suslin (so, in particular,
when $B$ is Borel), therefore the integral makes sense. The first
key monotonicity property noticed in \cite{feydelpra}, based on
\cite[2.10.27]{fed1}, is
$$
\Haus{\infty-k}_F(B)\leq\Haus{\infty-k}_G(B)\qquad\text{whenever
$F\subset G\subset\tilde{H}$}
$$
provided $\Haus{m-k}$ in \eqref{feydel} is understood as the
\emph{spherical} Hausdorff measure of dimension $m-k$ in $F$. This
naturally leads to the definition
\begin{equation}\label{hino50}
\Haus{\infty-k}(B):=\sup_F\Haus{\infty-k}_F(B)\qquad\text{$B$ Borel,}
\end{equation}
where the supremum runs among all finite-dimensional subspaces $F$
of $\tilde H$. Notice, however, that strictly speaking the measure
defined in \eqref{hino50} does not coincide with the one in
\cite{feydelpra}, since all finite-dimensional subspaces of $H$ are
considered therein. We make the restriction to finite-dimensional
subspaces of $\tilde{H}$ for the reasons explained in
Remark~\ref{rtoomuch}. However, still $\Haus{\infty-k}$ is defined
in a coordinate-free fashion.

These measures have been related for the first time to the perimeter
measure $D_\gamma\chi_E$ in \cite{hin09set}. Hino defined the
$F$-essential boundaries (obtained collecting the essential
boundaries of the finite-dimensional sections $E_y\subset F\times \{y \}$)
\begin{equation}\label{hino17}
\partial_F^*E:=\left\{(z,y):\ z\in\partial^* E_y\right\}
\end{equation}
and noticed another key monotonicity property (see also
\cite[Theorem~5.2]{AMP})
\begin{equation}\label{hino19}
\Haus{\infty-1}_F(\partial^*_FE\setminus\partial^*_GE)=0
\qquad\text{whenever $F\subset G\subset H^*$.}
\end{equation}
Then, choosing a sequence ${\cal F}=\{F_1,F_2,\ldots\}$ of
finite-dimensional subspaces of $H^*$ whose union is dense he
defined
\begin{equation}\label{hino70}
\Haus{\infty-1}_{{\cal F}}:=\sup_n\Haus{\infty-1}_{F_n},\qquad
\partial_{{\cal F}}^*E:=\liminf_{n\to\infty}\partial^*_{F_n}E
\end{equation}
and proved that
\begin{equation}\label{hino60}
|D_\gamma\chi_E|=\Haus{\infty-1}_{{\cal F}}\res\partial_{{\cal
F}}^*E.
\end{equation}

\begin{remark}\label{rcomparehino}
{\rm If we compare \eqref{hino60} with \eqref{hino31}, we see that
both the measure and the set are defined in \eqref{hino31} in a
coordinate-free fashion, using on one hand all finite-dimensional
subspaces of $\tilde H$, on the other hand the OU semigroup. In this
respect, it seems to us particularly difficult to compare null sets
w.r.t. $\Haus{\infty-1}_{{\cal F}}$ and $\Haus{\infty-1}_{{\cal
F}'}$ when ${\cal F}\neq{\cal F}'$; so, even though the left hand
side in \eqref{hino60} is coordinate-free, it seems difficult to
extract from this information a ``universal'' set. On the other
hand, combining \eqref{hino31} and \eqref{hino60} we obtain that
$E^{1/2}$ is equivalent to $\partial^*_{{\cal F}}E$, up to
$\Haus{\infty-1}_{{\cal F}}$-null sets (observe that, on the other
hand, it is not even clear that $\partial_{{\cal F}}^*E$ has
$\Haus{\infty-1}$ finite measure). So, in some sense, $E^{1/2}$ is
``minimal'' against the ``maximal'' measure
$\Haus{\infty-1}$.}\end{remark}

\section{Finite-dimensional facts}

Throughout this section we assume that $(X,\gamma)$ is a
finite-dimensional Gaussian space, with the associated OU semigroup
$T_t$. We assume that the norm of $X$ is equal to the Cameron-Martin
norm, so that we can occasionally identify $X$ with $\R^m$, $m={\rm
dim\,}X$, and identify $\gamma$ with the product $G_m\Leb{m}$ of $m$
standard Gaussians. Give a Borel set $E$, we shall denote by $E^1$
(resp. $E^0$) the set of density points of $E$ (resp. rarefaction
points) with respect to the Lebesgue measure (it would be the same
to consider $\gamma$, since this measure is locally comparable to
$\Leb{m}$).

In this finite dimensional setting, the first result is that the statement of Theorem~\ref{tessbou1} can
be improved, getting pointwise convergence up to
$|D_\gamma\chi_E|$-negligible sets:

\begin{proposition}\label{findim1}
Let $E\subset X$ be with finite $\gamma$-perimeter. Then, as
$t\downarrow 0$, $T_t\chi_E\to 1/2$ $|D_\gamma\chi_E|$-a.e. in $X$.
\end{proposition}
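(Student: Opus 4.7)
The plan is to combine Mehler's formula with De~Giorgi's blow-up theorem. Since in this setting $\gamma=G_m\Leb{m}$ with $G_m$ continuous and strictly positive, we have $|D_\gamma\chi_E|=G_m|D\chi_E|$, so the $\gamma$-perimeter measure and the classical Euclidean perimeter measure share the same negligible sets. In particular, at $|D_\gamma\chi_E|$-a.e. $x\in {\mathcal F}E$, setting $\nu:=\nu_E(x)$, the rescaled sets $F_r:=(E-x)/r$ converge in $L^1_{\rm loc}(\Rm)$ to the halfspace $H:=\{w\in\Rm:\langle w,\nu\rangle\geq 0\}$ as $r\downarrow 0$, and $\Leb{m}(H)=+\infty$ is harmless because we will integrate against a Gaussian weight.

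Fix such a point $x$. Put $r_t:=\sqrt{1-e^{-2t}}$ and $a_t:=(1-e^{-t})/r_t$, so that $r_t\downarrow 0$ and $a_t\downarrow 0$ as $t\downarrow 0$. Starting from \eqref{mehler}, the change of variables $z:=e^{-t}x+r_t y-x$ followed by the rescaling $z=r_t w$ yields
$$
T_t\chi_E(x)=\int_{\Rm}\chi_{F_{r_t}}(w)\,G_m(w+a_t x)\,dw.
$$
Now I split the integral over a ball $B_R$ and its complement. On $B_R$ the $L^1$-convergence $\chi_{F_{r_t}}\to\chi_H$ combined with the uniform continuity of $G_m$ on bounded sets (and with $a_t x\to 0$) gives
$$
\lim_{t\downarrow 0}\int_{B_R}\chi_{F_{r_t}}(w)\,G_m(w+a_t x)\,dw=\int_{B_R\cap H}G_m(w)\,dw.
$$
On $\Rm\setminus B_R$ we estimate crudely
$$
0\leq \int_{\Rm\setminus B_R}\chi_{F_{r_t}}(w)\,G_m(w+a_t x)\,dw\leq\int_{\Rm\setminus B_R}G_m(w+a_t x)\,dw,
$$
and the right-hand side converges as $t\downarrow 0$ to $\int_{\Rm\setminus B_R}G_m(w)\,dw$, which vanishes as $R\to\infty$. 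Letting first $t\downarrow 0$ and then $R\to\infty$, and recalling that $\int_H G_m\,dw=1/2$ since $H$ is a halfspace through the origin, we conclude $T_t\chi_E(x)\to 1/2$.

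The only delicate point is controlling the contribution of the complement of $B_R$ uniformly in the small parameter $t$; this is handled by the trivial domination above together with the observation that $a_t x$ stays bounded (and in fact vanishes) as $t\downarrow 0$, which makes the translated Gaussian tails uniformly integrable. Everything else reduces to local $L^1$-convergence of $\chi_{F_{r_t}}$ to $\chi_H$, which is exactly De~Giorgi's blow-up statement at a point of the reduced boundary.
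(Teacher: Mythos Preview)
Your proof is correct and follows essentially the same approach as the paper: both reduce to the Euclidean setting via $|D_\gamma\chi_E|=G_m|D\chi_E|$, invoke De~Giorgi's blow-up theorem at points of the reduced boundary, and then pass to the limit in Mehler's formula through a rescaling. The only cosmetic difference is that the paper centers the blow-up at $e^{-t}x$ (so the Gaussian weight stays fixed and one argues that $(E-e^{-t}x)/\sqrt{1-e^{-2t}}$ still converges to a halfspace because $1-e^{-t}=o(\sqrt{1-e^{-2t}})$), whereas you center at $x$ and absorb the shift into the Gaussian via $G_m(w+a_t x)$; the two computations are related by a translation and lead to the same conclusion.
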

\begin{proof} In this proof we identify $X$ with $\R^m$.
Since $|D_\gamma\chi_E|=G_m|D\chi_E|$, we know that $E$ has locally
finite Euclidean perimeter. Hence, the finite-dimensional theory
ensures that $|D\chi_E|$-almost every point $x$ the rescaled and
translated sets $(E-x)/r$ locally converge in measure as
$r\downarrow 0$ to an halfspace passing through the origin (see for
instance \cite[Theorem~3.59(a)]{afp}). We obtain that for
$|D_\gamma\chi_E|$-almost every point $x$ the sets
$$
E_{t,x}:=\frac{E-e^{-t}x}{\sqrt{1-e^{-2t}}}
$$
locally converge in measure as $t\downarrow 0$ to an halfspace (here
we use the fact that translating by $e^{-t}x$ instead of $x$ is
asymptotically the same, since $1-e^{-t}=o(\sqrt{1-e^{-2t}})$ as
$t\downarrow 0$). Hence, it suffices to show that $T_t\chi_E(x)\to
1/2$ at all points $x$ where this convergence holds. We compute:
\begin{eqnarray*}
T_t\chi_E(x)&=&(2\pi)^{-m/2}\int_{\R^m}\chi_E(e^{-t}x+\sqrt{1-e^{-2t}}y)e^{-|y|^2/2}\,dy\\&=&
(2\pi)^{-m/2}\int_{E_{t,x}}e^{-|y|^2/2}\,dy.
\end{eqnarray*}
Taking the limit as $t\downarrow 0$ yields $(2\pi)^{-m/2}\int_H
e^{-|w|^2/2}\,dw$ for some subspace $H$ with $0\in\partial H$. By
rotation invariance the value of the limit equals $1/2$.\end{proof}

In the next proposition we carefully estimate the blow-up rate of
the density of $T_t^*\mu$ as $t\downarrow 0$ when $\mu$ is a
codimension one Hausdorff measure on a ``nice'' hypersurface.

\begin{proposition}\label{findim2}
Let $K\subset\R^m$ be a Borel set contained in the union of finitely
many $C^1$ compact hypersurfaces. Then, for all $\eps>0$, there exist
$K_\eps \subset K$ and $t_\eps>0$ such that $\Haus{m-1}(K\setminus
K_\eps)<\eps$ and
$$
\sqrt{t}{T_t^*\bigl(G_m\Haus{m-1}\res K_\eps}\bigr)\leq \gamma\qquad\forall t\in
(0,t_\eps).
$$
\end{proposition}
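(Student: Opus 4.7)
The plan is to compute $T_t^*(G_m\Haus{m-1}\res K_\eps)$ explicitly via Mehler's formula and then to trim $K$ so that the resulting Gaussian convolution admits a uniform bound. Inserting \eqref{mehler} into the definition of $T_t^*$ and changing variables $z=e^{-t}x+\sqrt{1-e^{-2t}}y$, a direct computation gives
$$
T_t^*(G_m\Haus{m-1}\res K_\eps)=\rho_t^\eps\,\Leb{m},\qquad \rho_t^\eps(z)=G_m(z)\int_{K_\eps} p_{\sigma(t)}(e^{-t}z-x)\,d\Haus{m-1}(x),
$$
where $\sigma(t)^2:=1-e^{-2t}\sim 2t$ and $p_\sigma(v):=(2\pi\sigma^2)^{-m/2}e^{-|v|^2/(2\sigma^2)}$. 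Through the bijection $z\mapsto w:=e^{-t}z$, the inequality $\sqrt{t}\rho_t^\eps\leq\gamma$ is equivalent to
$$
\sqrt{t}\int_{K_\eps}p_{\sigma(t)}(w-x)\,d\Haus{m-1}(x)\leq 1\qquad\forall w\in\R^m,\ t\in(0,t_\eps).
$$

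To build $K_\eps$, write $K\subset S_1\cup\cdots\cup S_N$ and assume, after discarding the $\Haus{m-1}$-negligible pairwise intersections and merging repeated pieces, that the $S_i$ are pairwise $\Haus{m-1}$-disjoint. Then for $\Haus{m-1}$-a.e. $x\in K$ there is a unique $i(x)$ with $x\in S_{i(x)}$, and the gap $d(x):=\mathrm{dist}(x,\bigcup_{j\neq i(x)}S_j)$ is strictly positive. Choose $r_0>0$ sufficiently small so that
$$
K_\eps:=\{x\in K:d(x)\geq 2r_0\}
$$
satisfies $\Haus{m-1}(K\setminus K_\eps)<\eps$ and, by uniform continuity of the Gauss map on each compact $S_i$, so that for every $x_0\in K_\eps$ the intersection $S_{i(x_0)}\cap B_{r_0}(x_0)$ is the graph over the tangent hyperplane $H_{x_0}:=T_{x_0}S_{i(x_0)}$ of a $C^1$ map with Lipschitz constant at most $\delta$; here $\delta>0$ is fixed in advance with $\sqrt{1+\delta^2}/\sqrt{4\pi}<1$.

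Fix $w\in\R^m$. If $\mathrm{dist}(w,K_\eps)\geq r_0/2$, then $p_{\sigma(t)}(w-x)\leq(2\pi\sigma(t)^2)^{-m/2}e^{-r_0^2/(8\sigma(t)^2)}$ for every $x\in K_\eps$, so the integral is $o(1/\sqrt{t})$ as $t\downarrow 0$. Otherwise, pick $x_0\in K_\eps$ with $|w-x_0|<r_0/2$ and split $K_\eps$ into its intersection with $B_{r_0}(x_0)$ and its complement; the complement again yields an $o(1/\sqrt{t})$ contribution by the Gaussian tail bound. On the remaining piece, the definition of $K_\eps$ forces $K_\eps\cap B_{r_0}(x_0)\subset S_{i(x_0)}\cap B_{r_0}(x_0)$, which is a $\delta$-Lipschitz graph over $H_{x_0}$; applying the area formula, writing $w=(w_T,w_N)\in H_{x_0}\oplus H_{x_0}^\perp$, and bounding the Gaussian factor in the normal direction by $1$ yields
$$
\int_{K_\eps\cap B_{r_0}(x_0)}p_{\sigma(t)}(w-x)\,d\Haus{m-1}(x)\leq\sqrt{1+\delta^2}\,(2\pi\sigma(t)^2)^{-1/2}.
$$
Multiplying by $\sqrt{t}$ and using $\sigma(t)^2\sim 2t$, the left-hand side of the desired inequality is at most $\sqrt{1+\delta^2}/\sqrt{4\pi}+o(1)<1$ for $t<t_\eps$ with $t_\eps$ small enough.

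The main obstacle is arranging that the trimming produces uniform constants handling every $x_0\in K_\eps$ simultaneously: both the isolation radius $r_0$ (separating $x_0$ from the other hypersurfaces) and the flatness radius (below which $S_{i(x_0)}$ is a near-planar graph over $T_{x_0}S_{i(x_0)}$) must be uniform. Uniformity of the flatness radius is automatic from compactness and $C^1$-regularity of the $S_i$'s; uniformity of $r_0$ requires the level-set-of-$d$ construction, discarding the $\Haus{m-1}$-negligible set $\{d=0\}$ consisting of points of $K$ at or infinitesimally near intersections of distinct $S_i$'s. Once both uniformities are secured, the sharp planar asymptotic constant $1/\sqrt{4\pi}\approx 0.28$ leaves ample margin to absorb the Jacobian correction $\sqrt{1+\delta^2}$.
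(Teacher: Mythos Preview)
Your computation of the density via the Mehler-kernel symmetry and the near/far splitting around a point $x_0\in K_\eps$ are both correct, and this is essentially the same mechanism the paper uses (its Lemma on Lipschitz graphs packages the same Gaussian estimate, with an explicit decay function $\Omega_{m,R}$ playing the role of your ``far'' tail bound). Your asymptotic constant $\sqrt{1+\delta^2}/\sqrt{4\pi}$ is even slightly sharper than the paper's $\sqrt{(1+\eps^2)/(2\pi)}$, but either is $<1$, which is all that is needed.

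There is, however, a genuine gap in the construction of $K_\eps$. Your reduction ``after discarding the $\Haus{m-1}$-negligible pairwise intersections and merging repeated pieces'' assumes that distinct $C^1$ compact hypersurfaces meet in $\Haus{m-1}$-null sets. This is false: take $S_1=[0,1]^{m-1}\times\{0\}$ and $S_2=\{(x',f(x')):x'\in[0,1]^{m-1}\}$ with $f\in C^1$ vanishing exactly on a fat Cantor set; then $\Haus{m-1}(S_1\cap S_2)>0$ while $S_1\neq S_2$ on every open piece, so no ``merging'' is available. On $S_1\cap S_2$ the index $i(x)$ is undefined, and even after assigning points arbitrarily your function $d(x)=\mathrm{dist}(x,\bigcup_{j\neq i(x)}S_j)$ vanishes on a set of positive $\Haus{m-1}$-measure, so the level set $\{d\geq 2r_0\}$ cannot exhaust $K$ up to measure $\eps$.

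The paper avoids this by working not with the original $S_i$ but with a finite ball cover in which each piece of $K$ is a single graph; it then partitions $K$ into disjoint Borel pieces $A_i$ lying in those graphs and uses inner regularity to choose compact $E_i\subset A_i$ with $\sum_i\Haus{m-1}(A_i\setminus E_i)<\eps$ and $\min_{i\neq j}\mathrm{dist}(E_i,E_j)=:2\delta>0$. This yields exactly the uniform separation you need without any claim about $S_i\cap S_j$. If you replace your $d$-level-set construction by this step, the rest of your argument (the near/far split and the graph estimate) goes through unchanged.
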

\begin{proof} We can assume with no loss of generality that
$1+\eps^2<2\pi$. For any $y \in K$, let $r_y>0$ be a radius such
that:
\begin{enumerate}
\item[-] $K\cap B_{r_y}(y)$ is contained inside a $C^1$ submanifold $S_y$;
\item[-] there exists an orthogonal transformation $Q_y:\R^m \to \R^m$ such that
$Q_y(S_y)$ is contained inside the graph of a Lipschitz function
$u_y:B_{r_y}^{m-1}\subset \R^{m-1}\to \R$;
\item[-] the Lipschitz constant of $u_y$ is bounded by $\eps$.
\end{enumerate}
By compactness, there exists a finite set of points $y_1,\ldots,y_N$
such that
$$
K\subset \bigcup_{i=1}^N B_{r_{y_i}}(y_i).
$$
Let us define the disjoints family of sets $A_1= K \cap
B_{r_{y_1}}(y_1)$, $A_i:= K \cap B_{r_{y_i}}(y_i) \setminus \left(
\cup_1^{i-1}A_{j}\right)$ for $i=2,\ldots,N$. For any given
$\eps>0$, we can find compact sets $E_i\subset A_i$ such that
$$
\sum_{i=1}^N \Haus{m-1}(A_i\setminus E_i) <\eps,\qquad \min_{1\leq
i\neq j\leq N} {\rm dist}(E_i,E_j)=:2\delta>0.
$$
Let us set $K_\eps:=\cup_{i=1}^N E_i$, and let $R>0$ be sufficiently
large so that $K_\eps\subset B_R$. Thanks to
Lemma~\ref{lemma:estimate one graph} below applied with
$\Gamma=Q_{y_i}(E_i)$ for $i=1\ldots,N$, since $G_m$ is invariant
under orthogonal transformations there exists $t_i>0$ such that
$$
\sqrt{t}{T_t^*\bigl(G_m\Haus{m-1}\res E_i}\bigr)\leq
\sqrt{\frac{1+\eps^2}{2\pi}} \Omega_{m,R}\left({\rm
dist}(\cdot,E_i)/\sqrt{t}\right) \gamma\qquad\forall t\in (0,t_i).
$$
This implies that, for $0<t<\min_i t_i$,
$$
\sqrt{t}{T_t^*\bigl(G_m\Haus{m-1}\res K_\eps\bigr)}\leq
\sqrt{\frac{1+\eps^2}{2\pi}} \sum_{i=1}^N \Omega_{m,R}\left({\rm
dist}(\cdot,E_i)/\sqrt{t}\right) \gamma.
$$
Recalling that ${\rm dist}(E_i,E_j)\geq 2\delta>0$ for $i \neq j$,
for all $x \in \R^m$ it holds ${\rm dist}(x,E_i)>\delta$ for all $i$
with at most one exception. Hence, since $\Omega_{m,R}\leq 1$
and $\Omega_{m,R}(s)\to 0$ as $s \to +\infty$, we
get
$$
\sqrt{\frac{1+\eps^2}{2\pi}} \sum_{i=1}^N  \Omega_{m,R}\left({\rm
dist}(\cdot,E_i)/\sqrt{t}\right) \leq \sqrt{\frac{1+\eps^2}{2\pi}}
\left( 1+ (N-1)  \Omega_{m,R}\left(\delta/\sqrt{t}\right)
\right)\leq 1
$$
for $t$ sufficiently small, which concludes the proof. \end{proof}

\begin{lemma}\label{lemma:estimate one graph}
Let $A\subset \R^{m-1}$ be a bounded Borel set, let $u:A\mapsto\R$
be a Lipschitz function with Lipschitz constant $\ell$, and let
$\Gamma:=\{(z,u(z)):\ z \in A\}$ be the graph of $u$. Assume that
$\Gamma\subset B_R$ for some $R>0$. Then, there exist a continuous
function $\Omega_{m,R}:[0,+\infty) \to [0,1]$,
depending only on $m$ and $R$, and $\bar t>0$, such that
$\Omega_{m,R}(s)\to 0$ as $s \to +\infty$, and
$$
\sqrt{t}{T_t^*\bigl(G_m\Haus{m-1}\res \Gamma\bigr)}\leq
\sqrt{\frac{1+\ell^2}{2\pi}} \Omega_{m,R}\left({\rm
dist}(x,\Gamma)/\sqrt{t}\right)\gamma\qquad\forall t\in (0,\bar t).
$$
\end{lemma}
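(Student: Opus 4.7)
My plan is to start by computing the Lebesgue density of $T_t^*(G_m\Haus{m-1}\res\Gamma)$ explicitly. Writing Mehler's formula for $T_t\phi$, substituting into $\int T_t\phi\cdot G_m\,d\Haus{m-1}\res\Gamma$, and swapping integrals yields density
$$\rho_t(x)=(2\pi\alpha)^{-m/2}\int_\Gamma G_m(y)\,e^{-|x-e^{-t}y|^2/(2\alpha)}\,d\Haus{m-1}(y),\quad \alpha:=1-e^{-2t}.$$
A completion-of-the-square identity (relying on $\alpha+e^{-2t}=1$),
$$\tfrac{1}{2}|x|^2-\tfrac{1}{2}|y|^2-\tfrac{1}{2\alpha}|x-e^{-t}y|^2=-\tfrac{1}{2\alpha}|y-e^{-t}x|^2,$$
recasts $\rho_t(x)/G_m(x)$ as a fixed Gaussian convolution of the surface measure $\Haus{m-1}\res\Gamma$, with variance $\alpha$, evaluated at $e^{-t}x$.

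Next I parametrize $\Gamma$ via the Lipschitz chart $z\mapsto(z,u(z))$ on $A$; the area formula supplies a Jacobian $\sqrt{1+|\nabla u|^2}\leq\sqrt{1+\ell^2}$, which extracts the desired $\sqrt{1+\ell^2}$ prefactor once and for all. Setting $\xi:=e^{-t}x=(\xi',\xi_m)$, the residual quantity to estimate is $\int_A\exp\bigl(-(|z-\xi'|^2+(u(z)-\xi_m)^2)/(2\alpha)\bigr)dz$. A crude bound---discard the second exponential, integrate the Gaussian in $\R^{m-1}$---yields $(2\pi\alpha)^{(m-1)/2}$, and assembling this with $\sqrt{t}(2\pi\alpha)^{-m/2}$ gives $\sqrt{t}\,\rho_t(x)/G_m(x)\leq\sqrt{(1+\ell^2)\,t/(2\pi\alpha)}$. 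Since the right-hand side tends to $\sqrt{(1+\ell^2)/(4\pi)}<\sqrt{(1+\ell^2)/(2\pi)}$ as $t\downarrow 0$, the unweighted bound with $\Omega\equiv 1$ already holds for $t$ below some universal threshold.

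To obtain genuine decay $\Omega_{m,R}({\rm dist}(x,\Gamma)/\sqrt{t})\to 0$, I would sharpen the integral estimate using the pointwise inequality $|y(z)-\xi|^2\geq d_\xi^2$ with $d_\xi:={\rm dist}(\xi,\Gamma)$. For any $\theta\in[0,1]$ one has $|y(z)-\xi|^2\geq\theta d_\xi^2+(1-\theta)|z-\xi'|^2$, so integration yields $\int_A e^{-|y(z)-\xi|^2/(2\alpha)}dz\leq e^{-\theta d_\xi^2/(2\alpha)}\bigl(2\pi\alpha/(1-\theta)\bigr)^{(m-1)/2}$. Optimizing over $\theta$ produces a function of $d_\xi/\sqrt{\alpha}$ depending only on $m$ and decaying faster than any polynomial. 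The hypothesis $\Gamma\subset B_R$ enters only in passing from $d_\xi$ to $d:={\rm dist}(x,\Gamma)$: since $|x|\leq d+R$, $|x-\xi|=(1-e^{-t})|x|\leq t(d+R)$, whence $d_\xi\geq d-t(d+R)$; for $t$ small relative to $1/R$, $d_\xi/\sqrt{\alpha}$ controls $d/\sqrt{t}$ up to $R$-dependent constants. The resulting decaying function, capped at $1$, serves as $\Omega_{m,R}$. The principal bookkeeping challenge is verifying that every constant in this last passage depends only on $m$ and $R$, which works out because $\ell$ was fully absorbed at the Jacobian stage and all subsequent estimates are $\ell$-free.
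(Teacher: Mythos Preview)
Your argument is correct and reaches the same density formula as the paper---both compute $\rho_t(x)/G_m(x)=(2\pi\alpha)^{-m/2}\int_\Gamma e^{-|y-e^{-t}x|^2/(2\alpha)}\,d\Haus{m-1}(y)$ and extract the $\sqrt{1+\ell^2}$ factor via the area formula---but the mechanism for obtaining the decay is genuinely different. The paper splits the integration domain $A$ into a near part $A\cap B_{d/2}(x')$ and a far part $A\setminus B_{d/2}(x')$ (with $d={\rm dist}(x,\Gamma)$), treats each piece by hand using the elementary inequality $-|a-b|^2\leq -|a|^2/2+|b|^2$ to absorb the $e^{-t}$ factors, and invokes $\Gamma\subset B_R$ separately in both pieces; this yields an explicit but rather cumbersome $\Omega_{m,R}$. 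Your route is cleaner: the interpolation bound $|y(z)-\xi|^2\geq\theta\,d_\xi^2+(1-\theta)|z-\xi'|^2$ (valid because the left side dominates both $d_\xi^2$ and $|z-\xi'|^2$) turns the decay estimate into a single Gaussian integral with a free parameter $\theta$, whose optimization gives a universal $\tilde\Omega_m(d_\xi/\sqrt{\alpha})$ depending only on $m$; the hypothesis $\Gamma\subset B_R$ is then used once, at the very end, to convert $d_\xi={\rm dist}(e^{-t}x,\Gamma)$ into ${\rm dist}(x,\Gamma)$ via $|x-e^{-t}x|\leq t(d+R)$. One minor remark: your phrase ``$t$ small relative to $1/R$'' overstates the requirement---a universal threshold (say $t\leq 1/2$) suffices, since $(1-t)d/\sqrt{\alpha}-tR/\sqrt{\alpha}\geq c\,d/\sqrt{t}-C R$ with absolute constants $c,C$; the $R$-dependence enters $\Omega_{m,R}$ only as a shift of the argument of $\tilde\Omega_m$, not through $\bar t$.
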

\begin{proof}
Let us first observe that, given a test function $f:\R^m\to \R$, it
holds
\begin{eqnarray*}
\int_{\R^m} f \,dT_t^*\bigl(G_m\Haus{m-1}\res \Gamma\bigr)
&=&\int_\Gamma T_t f(y)G_m(y)\,d\Haus{m-1}(y)\\
&=& \int_{\R^m} f(x) \int_{\Gamma}\frac{e^{-\frac{|e^{-t}x|^2 -
2e^{-t}x\cdot y +|e^{-t}y|^2}{2(1-e^{-2t})}}}
{(1-e^{-2t})^{m/2}}G_m(y)\,d\Haus{m-1}(y)\,d\gamma(x).
\end{eqnarray*}
Hence, we have to show that, for any $x=(x',x_m) \in \R^{m-1}\times
\R$, the expression
$$
\sqrt{t}\int_{\Gamma}\frac{e^{-\frac{|e^{-t}x|^2 - 2e^{-t}x\cdot y
+|e^{-t}y|^2}{2(1-e^{-2t})}}}
{(1-e^{-2t})^{m/2}}G_m(y)\,d\Haus{m-1}(y)
=\frac{\sqrt{t}}{(2\pi)^{m/2}(1-e^{-2t})^{m/2}}\int_\Gamma
e^{-\frac{|e^{-t}x-y|^2}{2(1-e^{-2t})}}\,d\Haus{m-1}(y)
$$
is bounded by $\sqrt{\frac{1+\ell^2}{2\pi}} \Omega_{m,R}\left({\rm
dist}(x,\Gamma)/\sqrt{t}\right)$ for $t$ sufficiently small
(independent of $x$), with $\Omega_{m,R}$ as in the statement.

Thanks to the area formula and the bound on the Lipschitz constant,
we can write
\begin{eqnarray*}
&&\frac{\sqrt{t}}{(2\pi)^{m/2}(1-e^{-2t})^{m/2}}\int_\Gamma
e^{-\frac{|e^{-t}x-y|^2}{2(1-e^{-2t})}}\,d\Haus{m-1}(y)\\
&=&\frac{\sqrt{t}}{(2\pi)^{m/2}(1-e^{-2t})^{m/2}}\int_{A}
e^{-\frac{|e^{-t}x'-y'|^2}{2(1-e^{-2t})}}e^{-\frac{|e^{-t}x_m-u(y')|^2}{2(1-e^{-2t})}}
\sqrt{1+|\nabla u(y')|^2}\,dy'\\
&\leq&\frac{\sqrt{1+\ell^2}\sqrt{t}}{(2\pi)^{m/2}(1-e^{-2t})^{m/2}}\int_{A}
e^{-\frac{|e^{-t}x'-y'|^2}{2(1-e^{-2t})}}
e^{-\frac{|e^{-t}x_m-u(y')|^2}{2(1-e^{-2t})}}\,dy'.
\end{eqnarray*}
Now, since $t \leq 1-e^{-2t}$ for $t$ small, we can bound the above
expression by
\begin{equation}
\label{eq:bound}
\sqrt{\frac{1+\ell^2}{2\pi}}\frac{1}{(2\pi)^{(m-1)/2}(1-e^{-2t})^{(m-1)/2}}\int_{A}
e^{-\frac{|e^{-t}x'-y'|^2}{2(1-e^{-2t})}}e^{-\frac{|e^{-t}x_m-u(y')|^2}{2(1-e^{-2t})}}\,dy'.
\end{equation}
First of all we observe that, since
$$
\frac{1}{(2\pi)^{(m-1)/2}(1-e^{-2t})^{(m-1)/2}}\int_{A}
e^{-\frac{|e^{-t}x'-y'|^2}{2(1-e^{-2t})}}\,dy'=T_t \chi_A(x')\leq 1,
$$
the quantity in \eqref{eq:bound} is trivially bounded by $
(1+\ell^2)/(2\pi)$.

To show the existence of a function $\Omega_{m,R}$ as in the
statement of the lemma, we split the integral over $A$
into the one over $A\setminus B_{{\rm dist}(x,\Gamma)/2}(x')$, and
the one over $A\cap B_{{\rm dist}(x,\Gamma)/2}(x')$.

To estimate the first integral, we bound
$e^{-|e^{-t}x_m-u(y')|^2/[2(1-e^{-2t})]}$ by $1$. Moreover, we
observe that
\begin{eqnarray*}
T_t \chi_{A\setminus B_{{\rm dist}(x,\Gamma)/2}(x')}(x') &\leq
&\frac{1}{(2\pi)^{(m-1)/2}(1-e^{-2t})^{(m-1)/2}}\int_{\R^{m-1}\setminus
B_{{\rm dist}(x,\Gamma)/2}(x')}
e^{-\frac{|e^{-t}x'-y'|^2}{2(1-e^{-2t})}}\,dy'\\
&=&\frac{1}{(2\pi)^{(m-1)/2}}\int_{\R^{m-1}\setminus B_{{\rm
dist}(x,\Gamma)/[2\sqrt{1-e^{-2t}}]}}
e^{-\frac{|e^{-t}x'-x' - \sqrt{1-e^{-2t}}z'|^2}{2(1-e^{-2t})}}\,dz'\\
&=&\frac{1}{(2\pi)^{(m-1)/2}}\int_{\R^{m-1}\setminus B_{{\rm
dist}(x,\Gamma)/[2\sqrt{1-e^{-2t}}]}}
e^{-\frac{\left|z'+\sqrt{\frac{1-e^{-t}}{1+e^{-t}}}
x'\right|^2}{2}}\,dz'.
\end{eqnarray*}
We now remark that $-|a+b|^2 \leq -|a|^2/2 +|b|^2$ for all $a,b \in
\R^{m-1}$, $1-e^{-2t}\leq 2t$, and $\frac{1-e^{-t}}{1+e^{-t}} \leq
t$ for $t$ small. Hence, the above expression is bounded from above
by
$$
\frac{1}{(2\pi)^{(m-1)/2}}\int_{\R^{m-1}\setminus B_{{\rm
dist}(x,\Gamma)/(2\sqrt{2t})}} e^{-|z'|^2/4}e^{t|x'|^2/2}\,dz'.
$$
Since $\Gamma \subset B_R$ for some $R$, it holds $|x'| \leq |x|
\leq R+{\rm dist}(x,\Gamma)$, and so the above quantity can be
bounded from above by
\begin{eqnarray*}
&&\frac{1}{(2\pi)^{(m-1)/2}}e^{tR^2}e^{t{\rm
dist}(x,\Gamma)^2}\int_{\R^{m-1}\setminus B_{{\rm
dist}(x,\Gamma)/(2\sqrt{2t})}}
e^{-|z'|^2/4}\,dz'\\
&\leq&\frac{m\omega_m}{(2\pi)^{(m-1)/2}} e^{R^2}e^{{\rm
dist}(x,\Gamma)^2/100 t}\int_{{\rm
dist}(x,\Gamma)/(4\sqrt{t})}^{\infty} e^{-\tau^2/4}\tau^{m-1}\,d\tau
\end{eqnarray*}
for $t$ small (here $\omega_m$ denotes the measure of the unit ball in $\R^m$).

To control the second integral over
$A\cap B_{{\rm dist}(x,\Gamma)/2}(x')$, we bound $T_t \chi_{A\cap
B_{{\rm dist}(x,\Gamma)/2}(x')}(x')$ by $1$ and we estimate from
above, uniformly for $y'\in B_{{\rm dist}(x,\Gamma)/2}(x')$, the
quantity
$$
e^{-\frac{|e^{-t}x_m-u(y')|^2}{2(1-e^{-2t})}}.
$$
We proceed as follows: for $y' \in B_{{\rm dist}(x,\Gamma)/2}(x')$,
by the definition of ${\rm dist}(x,\Gamma)$, we have
$$
4|x'-y'|^2\leq {\rm dist}(x,\Gamma)^2 \leq |x'-y'|^2+|x_m -
u(y')|^2,
$$
which implies $3|x'-y'|^2\leq |x_m - u(y')|^2$, and so ${\rm
dist}(x,\Gamma)^2 \leq 4|x_m - u(y')|^2/3.$ Thus, using again the
estimate $-|a-b|^2 \leq -|a|^2/2 +|b|^2$, for $t$ small enough we
obtain
$$
e^{-\frac{|e^{-t}x_m-u(y')|^2}{2(1-e^{-2t})}} \leq e^{-\frac{|x_m -
u(y')|^2}{4(1-e^{-2t}))}}e^{\frac{(1-e^{-t})^2|x_m|^2}{(1-e^{-2t})}}
\leq e^{-{\rm dist}(x,\Gamma)^2/(16t)}e^{t|x_m|^2}.
$$
Since $|x_m| \leq |x| \leq R+{\rm dist}(x,\Gamma)$, we conclude that
$$
e^{-\frac{|e^{-t}x_m-u(y')|^2}{2(1-e^{-2t})}} \leq e^{R^2}e^{-{\rm
dist}(x,\Gamma)^2/(20t)} \qquad \forall y' \in B_{{\rm
dist}(x,\Gamma)/2}(x')
$$
for $t$ small enough.

Hence, it suffices to define
$$
\Omega_{m,R}(s):=\min\left\{1,
\frac{m\omega_m}{(2\pi)^{(m-1)/2}}e^{R^2}e^{s^2/100}\int_{s/4}^{\infty}
e^{-\tau^2/4}\tau^{m-1}\,d\tau+e^{R^2} e^{-s^2/20}\right\}
$$
(recall that $\int_{s/4}^{\infty} e^{-\tau^2/4}\tau^{m-1}\,d\tau\sim
c_me^{-s^2/64}s^{m-2}$ as $s \to +\infty$) to conclude the proof.
\end{proof}

The next lemma is stated with outer integrals $\int_Y^*$; this
suffices for our purposes and avoids the difficulty of proving that
the measures $\sigma_y$ we will dealing with have a measurable
dependence w.r.t. $y$.

\begin{lemma}\label{lrectifiable}
Let $(Y,{\cal F},\mu)$ be a probability space and, for $t>0$ and
$y\in Y$, let $g_{t,y}:X\to [0,1]$ be Borel maps. Assume also that:
\begin{itemize}
\item[(a)]$\{\sigma_y\}_{y\in Y}$ are
positive finite Borel measures in $X$, with
$\int_Y^*\sigma_y(X)\,d\mu(y)$ finite;
\item[(b)] $\sigma_y=G_m\Haus{m-1}\res\Gamma_y$ for $\mu$-a.e. $y$,
with $\Gamma_y$ countably $\Haus{m-1}$-rectifiable.
\end{itemize}
Then
\begin{equation}\label{hino7}
\limsup_{t\downarrow 0}\int_Y^*\int_X T_t
g_{t,y}(x)\,d\sigma_y(x)d\mu(y)\leq\limsup_{t\downarrow 0}
\frac{1}{\sqrt{t}}\int_Y^*\int_X g_{t,y}(x)\,d\gamma(x) d\mu(y).
\end{equation}
\end{lemma}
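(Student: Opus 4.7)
The plan is to invoke the duality $\int_X T_tg_{t,y}\,d\sigma_y=\int_X g_{t,y}\,dT_t^*\sigma_y$, then for each $y$ split $\sigma_y$ into a ``good'' part on which Proposition~\ref{findim2} gives precise pointwise control of $T_t^*$, plus a small remainder, and finally integrate over $y$ via an Egorov-type uniformization (the last being where the outer integrals enter crucially).

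Fix $\eps>0$. For each $y\in Y$, using the countable $\Haus{m-1}$-rectifiability of $\Gamma_y$ from hypothesis (b) together with the finiteness of $\sigma_y$, I would first choose a Borel set $\tilde\Gamma_y\subset\Gamma_y$ contained in finitely many compact $C^1$ hypersurfaces with $\sigma_y(\Gamma_y\setminus\tilde\Gamma_y)<\eps$. Applying Proposition~\ref{findim2} to $\tilde\Gamma_y$ would then produce $K_y^\eps\subset\tilde\Gamma_y$ and $t_y^\eps>0$ such that $\Haus{m-1}(\tilde\Gamma_y\setminus K_y^\eps)<\eps$ and
$$
\sqrt{t}\,T_t^*\bigl(G_m\Haus{m-1}\res K_y^\eps\bigr)\le\gamma\qquad\forall\, t\in(0,t_y^\eps).
$$
Setting $\sigma_y^\eps:=G_m\Haus{m-1}\res K_y^\eps$ and exploiting $0\le g_{t,y}\le 1$ (so that $T_tg_{t,y}\le 1$), for $t\in(0,t_y^\eps)$ I get
$$
\int_X T_tg_{t,y}\,d\sigma_y
=\int_X g_{t,y}\,dT_t^*\sigma_y^\eps+\int_X T_tg_{t,y}\,d(\sigma_y-\sigma_y^\eps)
\le\frac{1}{\sqrt t}\int_X g_{t,y}\,d\gamma+c_m\eps,
$$
where $c_m$ depends only on $m$ and bounds $(\sigma_y-\sigma_y^\eps)(X)\le\eps+\|G_m\|_\infty\eps$.

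The threshold $t_y^\eps$ depends on $y$, so the inequality cannot be integrated directly. To handle this, for each $n\in\N$ I would set $Y_n^\eps:=\{y\in Y:\,t_y^\eps\ge 1/n\}$. These sets increase to $Y$, and hypothesis (a) provides a measurable majorant $h\in L^1(\mu)$ of $y\mapsto\sigma_y(X)$, so $n=n(\eps)$ can be chosen with $\int_{Y\setminus Y_n^\eps}^*\sigma_y(X)\,d\mu(y)<\eps$. On $Y\setminus Y_n^\eps$ I use the trivial estimate $\int_X T_tg_{t,y}\,d\sigma_y\le\sigma_y(X)$, whereas on $Y_n^\eps$ the bound of the previous paragraph holds uniformly for $t\in(0,1/n)$. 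By monotonicity and subadditivity of $\int_Y^*$, for all $t<1/n$
$$
\int_Y^*\!\!\int_X T_tg_{t,y}\,d\sigma_y\,d\mu(y)
\le\frac{1}{\sqrt t}\int_Y^*\!\!\int_X g_{t,y}\,d\gamma\,d\mu(y)+c_m\eps\,\mu(Y)+\eps,
$$
and taking $\limsup_{t\downarrow 0}$ followed by $\eps\downarrow 0$ yields \eqref{hino7}.

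The main obstacle I expect is the measurability of the selection $y\mapsto(K_y^\eps,t_y^\eps)$, and correspondingly of the sets $Y_n^\eps$; this is precisely why the statement is formulated with outer integrals. I would handle it either by a measurable selection theorem applied to the multivalued map $y\rightrightarrows\{(K,t)\text{ admissible}\}$, or by replacing $Y_n^\eps$ with a measurable envelope and arguing throughout via the monotonicity and subadditivity of $\int^*$ together with the dominating majorant $h$.
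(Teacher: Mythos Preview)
Your proposal is correct and follows essentially the same approach as the paper's proof: split off a good piece of each $\sigma_y$ via Proposition~\ref{findim2}, control $T_t^*$ pointwise on that piece by $\gamma/\sqrt{t}$, and handle the $y$-dependent threshold $t_y$ by an Egorov-type restriction to $\{t_y>\delta\}$ together with continuity properties of the upper integral. The paper organizes this as a ``special case plus reduction'' (first assuming $\sqrt{t}\,T_t^*\sigma_y\le\gamma$ for $t<t_y$, then approximating), whereas you merge the two steps into one pass, but the ideas---including the acknowledged measurability issue for $Y_n^\eps$, which the paper handles by the same appeal to the upper integral---are identical.
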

\begin{proof} We prove first the lemma under the stronger
assumption that, for $\mu$-a.e. $y\in Y$, there exists $t_y>0$ such
that
$$
T_t^*\sigma_y\leq \frac{1}{\sqrt{t}}\gamma\qquad\forall t\in
(0,t_y).
$$
Fix $\eps>0$ small, and set $Y_{\eps}:=\{y\in Y:\ t_y>\delta\}$, where
$\delta=\delta(\eps)>0$ is chosen sufficiently small in such a way
that $\int_{Y_\eps}^*\int_X T_t g_{t,y}\,d\sigma_y d\mu(y)+\eps\geq
\int_Y^*\int_X T_t g_{t,y}\,d\sigma_y d\mu(y)$ (this is possible, by
the continuity properties of the upper integral). For $t\in
(0,\delta)$ we estimate the integrals in \eqref{hino7} with $Y_\eps$
in place of $Y$:
$$
\int_{Y_\eps}^*\int_X T_t g_{t,y}\,d\sigma_y d\mu(y)=
\int_{Y_\eps}^*\int_X g_{t,y}\,dT_t^*\sigma_yd\mu(y)\leq
\frac{1}{\sqrt{t}}\int_Y^*\int_X g_{t,y}\,d\gamma d\mu(y).
$$
Hence, letting $t\downarrow 0$ yields \eqref{hino7} with an extra
summand $\eps$ in the right hand side. Since $\eps$ is arbitrary we
conclude.

Finally, in the general case when $\Gamma_y$ is countably
$\Haus{m-1}$-rectifiable we can find for any $\eps>0$ sets
$\Gamma_y'\subset\Gamma_y$ contained in the union of finitely many
hypersurfaces such that $\sigma_y(\Gamma_y\setminus
\Gamma_y')<\eps/2$ and then, thanks to Proposition~\ref{findim2},
sets $\Gamma_y''\subset\Gamma_y'$ with
$\sigma_y(\Gamma_y'\setminus\Gamma_y'')<\eps/2$ in such a way that
the estimate \eqref{hino7} holds when $\sigma_y$ is replaced by
$G_m\Haus{m-1}\res\Gamma_y''$. Since $T_tg_t\leq 1$ we
can let $\eps\downarrow 0$ to obtain \eqref{hino7}.\end{proof}

In the proof of Theorem~\ref{tessbou2} we need a Poincar\'e
inequality involving capacities. Recall that the $1$-dimensional
capacity $c_1(G)$ of a Borel set $G$ can be defined as:
$$
c_1(G):=\inf\left\{|Du|(\R^m):\ u\in L^{m/(m-1)}(\R^m),\,\,G\subset
{\rm int}(\{u\geq 1\})\right\}
$$
(see \cite[\S5.12]{Ziemer}; other equivalent definitions involve the
Bessel capacity). The following result is known (see for instance
\cite[Theorem~5.13.3]{Ziemer}) but we reproduce it for the reader's
convenience in the simplified case when $v$ is continuous.

\begin{lemma}\label{lpoincare2}
Let $v\in W^{1,1}(B_{r})\cap C(B_r)$ and let $G\subset B_r$ be a
Borel set with $c_1(G)>0$. Then, for some dimensional constant
$\kappa$, it holds
$$
\frac{1}{\omega_mr^m}\int_{B_r}|v|\,dx\leq
\frac{\kappa}{c_1(G)}\int_{B_r}|\nabla v|\,dx
$$
whenever $v$ vanishes $c_1$-a.e. on $G$.
\end{lemma}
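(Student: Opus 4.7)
The plan is to combine the classical mean-value Poincar\'e inequality on $B_r$ with an estimate of $|v_{B_r}|$ coming from the capacity of $G$. First I would reduce to $r=1$ by scaling: setting $v_r(y):=v(ry)$ for $y\in B_1$ and $G_r:=r^{-1}G\subset B_1$, one checks that $c_1(G) = r^{m-1}c_1(G_r)$, and both sides of the desired inequality transform identically under $(v,G,r)\mapsto(v_r,G_r,1)$, so I may assume $r=1$.

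On $B_1$ the classical Poincar\'e inequality gives $\int_{B_1}|v-v_{B_1}|\,dx \le C_m\int_{B_1}|\nabla v|\,dx$, so by the triangle inequality it suffices to prove
$$
|v_{B_1}| \le \frac{\kappa'}{c_1(G)}\int_{B_1}|\nabla v|\,dx
$$
for some dimensional constant $\kappa'$. Assume WLOG $a:=v_{B_1}>0$ and set $w:=(a-v)/a$. Since $v$ is continuous and vanishes $c_1$-a.e.\ on $G$, replacing $G$ by $G\cap\{v=0\}$ (which has the same $c_1$, by countable subadditivity of the capacity) we may assume $v\equiv 0$, hence $w\equiv 1$, on $G$ pointwise. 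I would then extend $w$ to $\tilde w\in W^{1,1}(\R^m)\cap L^{m/(m-1)}(\R^m)$ via a standard $W^{1,1}$ extension from $B_1$ multiplied by a cutoff, arranging $|D\tilde w|(\R^m) \le C_m\,a^{-1}\int_{B_1}|\nabla v|\,dx$. A tiny additive perturbation $\tilde w+\eta$ is then continuous and strictly greater than $1$ on $G$, whence $G\subset {\rm int}(\{\tilde w+\eta\ge 1\})$; plugging this admissible function into the capacity definition yields $c_1(G)\le |D\tilde w|(\R^m) \le C_m\,a^{-1}\int_{B_1}|\nabla v|\,dx$, which is the desired mean-value bound.

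The main obstacle is the extension step: producing $\tilde w \in W^{1,1}(\R^m)\cap L^{m/(m-1)}(\R^m)$ which equals $w$ (or at least is $\ge 1$) near $G$, while preserving the control of $|D\tilde w|(\R^m)$ in terms of $\int_{B_1}|\nabla v|\,dx$. The continuity hypothesis on $v$ is precisely what makes this manageable: it allows the passage from ``$v=0$ $c_1$-a.e.\ on $G$'' to genuine pointwise vanishing on $G$, and it ensures that the additive perturbation creates the \emph{open} neighborhood of $G$ demanded by the ``interior'' condition in the capacity definition.
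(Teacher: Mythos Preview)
Your argument is essentially correct and takes a genuinely different route from the paper. A small repair is needed: the perturbation $\tilde w+\eta$ with $\eta$ a constant is not in $L^{m/(m-1)}(\R^m)$, so it is not admissible in the definition of $c_1$. Replace it by $\tilde w+\eta\phi$ with $\phi\in C^\infty_c(\R^m)$, $\phi\equiv 1$ on $B_1$; then $G\subset\{w>1-\eta\}\subset B_1$ is open (by continuity of $v$) and contained in $\{\tilde w+\eta\phi>1\}$, so the function is admissible, and the extra contribution $\eta\|\nabla\phi\|_{L^1}$ to the total variation vanishes as $\eta\downarrow 0$. Also, to merge the Poincar\'e term $C_m\int_{B_1}|\nabla v|$ with the capacity term you must use $c_1(G)\le c_1(B_1)$ once, which is harmless.

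The paper proceeds differently: after scaling and truncating to $0\le v\le 1$, it applies the coarea formula to reduce to a set-level inequality
\[
\Leb{m}(B_1\setminus E)\le \frac{\kappa}{c_1(G)}|D\chi_E|(B_1)\qquad\text{whenever $E$ is open and $G\subset E$,}
\]
which is then proved by a case split on $\Leb{m}(E)$: for large $E$ the relative isoperimetric inequality suffices, while for small $E$ one extends $\chi_E$ to a compactly supported $BV$ function and reads off $c_1(G)\le c_m|D\chi_E|(B_1)$ from the definition. Your approach bypasses coarea and the case split entirely, observing instead that $(v_{B_1}-v)/v_{B_1}$ is (after a minor normalization) itself an admissible competitor for $c_1(G)$, once extended; the mean-zero property of this function is what makes the extension bound depend only on $\int_{B_1}|\nabla v|$. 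This is more direct. The paper's route, on the other hand, isolates the set-level isoperimetric-capacitary inequality, which is an explicit geometric statement of independent interest, and requires continuity of $v$ only to ensure the sublevel sets are open.
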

\begin{proof} By a scaling argument, suffices to consider
the case $r=1$. By a truncation argument (i.e., first considering
the positive and negative parts and then replacing $v$ by
$\min\{v,n\}$ with $n \in\N$) we can also assume that $v$ is nonnegative and
bounded. By homogeneity of both sides, suffices to consider the case
$0\leq v\leq 1$. In this case the statement follows by applying the
inequality
\begin{equation}\label{hino15}
\Leb{m}(B_1\setminus E)\leq\frac{\kappa}{c_1(G)}
|D\chi_E|(B_1)\qquad\text{whenever $E$ is open and $G\subset E$}
\end{equation}
with $E=\{v<t\}$, $t\in (0,1)$, and then integrating both sides with respect to $t$ and using the coarea formula. Hence,
we are led to the proof of \eqref{hino15}. Now, if
$\Leb{m}(E)\geq\omega_m/2$ we can apply the relative isoperimetric
inequality in $B_1$ to get
$$
\Leb{m}(B_1\setminus E)\leq c_m|D\chi_E|(B_1)\leq
\frac{\kappa}{c_1(G)}|D\chi_E|(B_1)
$$
provided we choose $\kappa$ so large that $\kappa\geq c_1(B_1) c_m$ (observe that $c_1(G) \leq c_1(B_1)$).
On the other hand, if $\Leb{m}(E)\leq\omega_m/2$ then we estimate
$\Leb{m}(B_1\setminus E)$ from above with $\omega_m$ and it suffices to show
that $|D\chi_E|(B_1)\geq c_1(G)\omega_m/\kappa$ for
$\kappa=\kappa(m)$ large enough. In this case we can find a
compactly supported $BV$ function $u$ coinciding with $\chi_E$ on
$B_1$ with
$$|Du|(\R^m)\leq c_m' \bigl(|D\chi_E|(B_1)+\Leb{m}(E\cap
B_1)\bigr)\leq c_m'(1+c_m)|D\chi_E|(B_1)
$$
(see for instance \cite[Proposition~3.21]{afp} for the existence of
a continuous linear extension operator from $BV(B_1)$ to
$BV(\R^m)$). It follows that $c_1(G)\leq c_m'(1+c_m)|D\chi_E|(B_1)$,
so suffices to take $\kappa$ such that $\kappa/\omega_m\geq
c_m'(1+c_m)$.\end{proof}

In the sequel we shall extensively use the following identity
between null sets w.r.t. $c_1$ and null sets w.r.t. to codimension
one Hausdorff measure, see for instance \cite[Lemma~5.12.3]{Ziemer}:
\begin{equation}\label{hino14}
c_1(G)=0\qquad\Longleftrightarrow\qquad\Haus{m-1}(G)=0.
\end{equation}

\begin{lemma}\label{ldensity} Let $G\subset\R^m$ be a Borel set. Then
$$
\limsup_{r\downarrow 0}\frac{c_1(G\cap B_r(x))}{r^{m-1}}>0
\qquad\text{for $c_1$-a.e. $x\in G$.}
$$
\end{lemma}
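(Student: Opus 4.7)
Set
\[
N:=\Bigl\{x\in G:\ \limsup_{r\downarrow 0}\tfrac{c_1(G\cap B_r(x))}{r^{m-1}}=0\Bigr\},
\]
and for every $\eta>0$,
\[
N_\eta:=\Bigl\{x\in G:\ \limsup_{r\downarrow 0}\tfrac{c_1(G\cap B_r(x))}{r^{m-1}}<\eta\Bigr\}.
\]
Since $N\subset N_\eta$, the monotonicity of $c_1$ and its locality reduce the claim, for each fixed $R>0$, to the quantitative estimate
\[
c_1(N_\eta\cap B_R)\le C_m\,\eta\,R^{m-1}.
\]
Letting $\eta\downarrow 0$ will give $c_1(N\cap B_R)=0$, and then $R\to\infty$ yields $c_1(N)=0$, which is the stated conclusion.

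My strategy is to argue by duality. Assuming $c_1(N_\eta\cap B_R)>0$, Frostman's lemma for the $1$-capacity---available via \eqref{hino14} and the comparability between $c_1$ and the $(m-1)$-dimensional Hausdorff content, see \cite[\S5.12]{Ziemer}---produces a positive Radon measure $\mu$ with $\mathrm{supp}(\mu)\subset N_\eta\cap B_R$, with Frostman bound $\mu(B_r(y))\le r^{m-1}$ for all $y\in\R^m$ and $r>0$, total mass $\mu(\R^m)\ge c_m\,c_1(N_\eta\cap B_R)$, and obeying the dual inequality $\mu(A)\le C_m\,c_1(A)$ for every Borel $A$. For every $x\in N_\eta$ the definition of $N_\eta$ gives $\rho_x>0$ with $c_1(G\cap B_r(x))<\eta r^{m-1}$ for $r<\rho_x$; combined with $\mathrm{supp}(\mu)\subset G$ and the dual inequality, this transfers the upper density bound from $c_1$ to $\mu$:
\[
\mu(B_r(x))=\mu(G\cap B_r(x))\le C_m\,c_1(G\cap B_r(x))<C_m\,\eta\,r^{m-1}\qquad\forall\,r<\rho_x.
\]
Thus the upper $(m-1)$-density of $\mu$ is at most $C_m\eta$ at every point of $\mathrm{supp}(\mu)$.

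The remaining step, which I expect to be the main obstacle, is to pass from this pointwise upper-density bound to a global estimate $\mu(N_\eta\cap B_R)\le C'\eta R^{m-1}$. I would apply Vitali's $5r$-covering lemma to the fine family $\{B_r(x):x\in N_\eta\cap B_R,\ 5r<\rho_x\}$, extract a countable disjoint subfamily $\{B_{r_i}(x_i)\}$ whose $5$-fold enlargements cover $N_\eta\cap B_R$, and then combine (i) the density bound $\mu(B_{5r_i}(x_i))<C_m\eta(5r_i)^{m-1}$ on each enlarged ball, (ii) the Frostman bound $\mu(B_{r_i}(x_i))\le r_i^{m-1}$ on the disjoint originals contained in $B_{R+1}$, and (iii) the lower bound $c_1(B_r)\ge c\,r^{m-1}$ for capacities of balls, in order to control $\sum_i r_i^{m-1}$ in terms of $R^{m-1}$ and obtain the target bound. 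The lower bound $\mu(\R^m)\ge c_m c_1(N_\eta\cap B_R)$ then delivers the capacity estimate. This closing Vitali argument, classical in spirit (cf.\ Ziemer's monograph), is the technical heart of the proof, and is where the subtle interplay between disjointness, Frostman growth, and the pointwise upper-density bound must be exploited.
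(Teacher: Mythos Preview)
Your Frostman-measure approach is appealing, but the closing Vitali step has a genuine gap. You want to bound $\sum_i r_i^{m-1}$ by a constant times $R^{m-1}$ using disjointness of the balls $B_{r_i}(x_i)\subset B_{R+1}$, but disjointness only yields $\sum_i r_i^{m}\le (R+1)^m$, and $\sum_i r_i^{m-1}$ can be infinite (take countably many tiny disjoint balls). None of your listed ingredients helps: the Frostman bound $\mu(B_{r_i}(x_i))\le r_i^{m-1}$ goes the wrong way, the estimate $c_1(B_r)\ge cr^{m-1}$ is useless because capacity is subadditive rather than superadditive on disjoint sets, and the dual inequality $\mu\le C_m c_1$ gives nothing new. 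In fact the information you have extracted---a nonzero measure with Frostman growth $\mu(B_r)\le Cr^{m-1}$ and vanishing upper $(m{-}1)$-density at every point of its support---does \emph{not} force $\mu=0$: Lebesgue measure restricted to a ball has both properties. So the quantitative target $c_1(N_\eta\cap B_R)\le C_m\eta R^{m-1}$ cannot be reached from these data alone.

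What is missing is a \emph{finite} additive reference measure against which the Vitali bookkeeping can be closed. The paper's proof supplies this by first passing, via Besicovitch's subset theorem \cite{Besicovitch}, to a compact $L'$ inside the bad set with $0<\Haus{m-1}(L')<\infty$. It then compares $c_1(L'\cap\overline B_r(x))$ not to $r^{m-1}$ but to $\Haus{m-1}(L'\cap\overline B_r(x))$: a Vitali cover of the exceptional set by disjoint balls with $c_1(L'\cap\overline B_{r_i})\le\varepsilon\,\Haus{m-1}(L'\cap\overline B_{r_i})$ gives, by subadditivity of $c_1$ and additivity of $\Haus{m-1}\res L'$ on disjoint sets, $c_1(L'')\le\varepsilon\sum_i\Haus{m-1}(L'\cap\overline B_{r_i})\le\varepsilon\,\Haus{m-1}(L')<\infty$. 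This shows $\liminf_r c_1(L'\cap\overline B_r(x))/\Haus{m-1}(L'\cap\overline B_r(x))>0$ $\Haus{m-1}$-a.e.\ on $L'$, and combining with the standard density fact $\limsup_r\Haus{m-1}(L'\cap\overline B_r(x))/r^{m-1}>0$ $\Haus{m-1}$-a.e.\ on $L'$ yields the contradiction. Your argument can be repaired, but only by reinserting precisely this reduction to a set of finite $\Haus{m-1}$ measure.
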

\begin{proof} Let $L\subset G$ be the Borel set of points
where the limsup is null and assume by contradiction that
$c_1(L)>0$. Then \eqref{hino14} yields $\Haus{m-1}(L)>0$ as well and
we can find, thanks to \cite{Besicovitch}, a compact subset $L'$
with $0<\Haus{m-1}(L')<\infty$. We will prove that
\begin{equation}\label{hino12}
\liminf_{r\downarrow 0}\frac{c_1(L'\cap
\overline{B}_r(x))}{\Haus{m-1}(L'\cap
\overline{B}_r(x))}>0\qquad\text{for $\Haus{m-1}$-a.e. $x\in L'$.}
\end{equation}
Combining this information with the well-know fact (see for instance
\cite[(2.43)]{afp})
\begin{equation}\label{hino13}
\limsup_{r\downarrow 0}\frac{\Haus{m-1}(L'\cap
\overline{B}_r(x))}{r^{m-1}}>0 \qquad\text{for $\Haus{m-1}$-a.e.
$x\in L'$,}
\end{equation}
we obtain
$$
\limsup_{r\downarrow 0}\frac{c_1(L'\cap
\overline{B}_r(x))}{r^{m-1}}>0 \qquad\text{for $\Haus{m-1}$-a.e.
$x\in L'$,}
$$
in contradiction with the inclusion $L'\subset L$ and the fact that
$\Haus{m-1}(L')>0$.

To conclude the proof, we check \eqref{hino12}. Let $L''\subset L'$
be the Borel set of points where the liminf in \eqref{hino12} is
null; for all $\eps>0$ we can find, thanks to Vitali covering theorem,
a disjoint cover of $\Haus{m-1}$-almost all of $L''$ by disjoint
closed balls $\{\overline{B}_{r_i}(x_i)\}_{i\in I}$ satisfying
$c_1(L'\cap
\overline{B}_{r_i}(x_i))\leq\eps\Haus{m-1}(L'\cap\overline{B}_{r_i}(x_i))$.
Thanks to \eqref{hino14} the balls cover also $c_1$-almost all of $L''$, so the countable
subadditivity of capacity yields $c_1(L'')\leq\eps\Haus{m-1}(L')$. Since
$\eps$ is arbitrary we conclude that $c_1(L'')=0$, whence
$\Haus{m-1}(L'')=0$ by \eqref{hino14}.
\end{proof}

\begin{proposition}\label{pfederer}
Let $(u_n)\subset W^{1,1}(X,\gamma)\cap C(X)$ be convergent in
$L^1(X,\gamma)$ to $\chi_E$, with $E$ of finite perimeter, and
satisfying
\begin{equation}\label{hino23}
\limsup_{n\to\infty}\int_X|\nabla u_n|\,d\gamma\leq
|D_\gamma\chi_E|(X).
\end{equation}
Then
$$
L:=\left\{x:\ \lim_{n\to\infty} u_n(x)=\frac{1}{2}\right\}
$$
is contained, up to $\Haus{m-1}$-negligible sets, in the essential
boundary of $E$.
\end{proposition}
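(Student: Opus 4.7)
The goal is to show $\Haus{m-1}(L \setminus \partial^*E) = 0$, and since $\partial^*E = X \setminus (E^0 \cup E^1)$ it suffices to prove $\Haus{m-1}(L \cap E^1) = 0$ and $\Haus{m-1}(L \cap E^0) = 0$; the two statements are symmetric (replace $u_n$ by $1 - u_n$ and $E$ by $X \setminus E$), so I treat only the first. Arguing by contradiction, suppose $\Haus{m-1}(L \cap E^1) > 0$. Since $u_n(x) \to 1/2$ for $x \in L$, the Borel sets $L^N := L \cap \bigcap_{n \geq N}\{u_n \leq 2/3\}$ increase to $L$, hence $\Haus{m-1}(L^{N_0} \cap E^1) > 0$ for some $N_0$. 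I would then select $x_0 \in L^{N_0} \cap E^1$ enjoying two density properties. First, by Lemma~\ref{ldensity} together with \eqref{hino14}, $\Haus{m-1}$-a.e.\ $x \in L^{N_0} \cap E^1$ satisfies
$$\limsup_{r\downarrow 0}\frac{c_1(L^{N_0} \cap E^1 \cap B_r(x))}{r^{m-1}} > 0.$$
Second, since $|D\chi_E|$ is concentrated on the $(m-1)$-rectifiable reduced boundary $\mathcal{F}E$ (disjoint from $E^1$) and a rectifiable measure of finite mass has vanishing $(m-1)$-density at $\Haus{m-1}$-a.e.\ point outside its support, $\Haus{m-1}$-a.e.\ $x \in E^1$ has $|D\chi_E|(B_r(x))/r^{m-1} \to 0$. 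Both properties hold simultaneously at some $x_0$, which also satisfies the density-one condition $\Leb{m}(E\cap B_r(x_0))/(\omega_m r^m) \to 1$ by definition of $E^1$.

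For the lower bound: when $n \geq N_0$, the continuous function $v_n := (u_n - 2/3)_+ \in W^{1,1}(B_r(x_0))$ vanishes on $L^{N_0}$, hence on $G := L^{N_0} \cap E^1 \cap B_r(x_0)$. Lemma~\ref{lpoincare2} together with $|\nabla v_n| \leq |\nabla u_n|$ thus gives
$$\int_{B_r(x_0)} v_n\,dx \leq \frac{\kappa\,\omega_m r^m}{c_1(G)} \int_{B_r(x_0)} |\nabla u_n|\,dx.$$
Letting $n \to \infty$, using $v_n \to \chi_E/3$ in $L^1$, and then restricting to radii $r_k\downarrow 0$ drawn from the capacity-density sequence, I obtain
$$\liminf_n \int_{B_{r_k}(x_0)} |\nabla u_n|\,dx \geq C\, r_k^{m-1} \qquad \text{for $k$ large},$$
with $C > 0$.

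The matching upper bound comes from Proposition~\ref{prop:sci}: applying lower semicontinuity on the open set $X \setminus \bar B_r(x_0)$ and combining with the hypothesis $\limsup_n \int_X |\nabla u_n|\,d\gamma \leq |D_\gamma\chi_E|(X)$ yields
$$\limsup_n \int_{B_r(x_0)} |\nabla u_n|\,d\gamma \leq |D_\gamma\chi_E|(\bar B_r(x_0)) \quad \text{for every } r > 0.$$
Since $G_m$ is continuous and bounded away from $0$ near $x_0$, and $|D_\gamma\chi_E| = G_m |D\chi_E|$, this translates into $\limsup_n \int_{B_r(x_0)} |\nabla u_n|\,dx \leq C' |D\chi_E|(\bar B_r(x_0)) = o(r^{m-1})$ as $r \downarrow 0$, by the density-zero property chosen above. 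Specialized to $r_k$, this contradicts the previous lower bound $C r_k^{m-1}$. The main obstacle is the density-zero statement for $|D\chi_E|$ at $\Haus{m-1}$-a.e.\ point of $E^1$; it rests on the structural theorem for $|D\chi_E|$ and the standard density calculus for rectifiable measures, rather than on anything proved in the paper itself.
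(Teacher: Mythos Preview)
Your proof is correct and follows essentially the same approach as the paper: both select a point of $L\cap E^1$ where the capacity density is positive (via Lemma~\ref{ldensity}) and $|D\chi_E|(B_r)/r^{m-1}\to 0$, and then derive a contradiction from the capacity--Poincar\'e inequality (Lemma~\ref{lpoincare2}). The only cosmetic differences are that the paper uses a piecewise-affine truncation $\phi\circ u_n$ and the weak$^*$ convergence of $|\nabla u_n|\gamma$ to $|D_\gamma\chi_E|$ to pass to the limit in one step, whereas you use $(u_n-2/3)_+$ and obtain the contradiction by pairing a lower bound on $\liminf_n\int_{B_{r_k}}|\nabla u_n|$ with an upper bound on $\limsup_n$ coming from lower semicontinuity on $X\setminus\bar B_r$; the density-zero fact you flag as the main obstacle is likewise invoked without proof in the paper and follows from the standard comparison $\mu(B)\geq c\,t\,\Haus{m-1}(B)$ whenever $\Theta^{*,m-1}(\mu,\cdot)\geq t$ on $B$, applied with $\mu=|D\chi_E|$ and $B\subset E^1$.
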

\begin{proof} Possibly passing to the smaller sets
$$
L \cap \biggl(\bigcup_{n=m}^\infty\left\{x\in X:\ |u_n(x)-\frac12|\leq\frac
14\right\}\biggr)
$$
which monotonically converge to $L$ as $m\to\infty$, we can assume
with no loss of generality that $|u_n-1/2|\leq 1/4$ on $L$.

Let us prove, first, that \eqref{hino23} yields the weak$^*$ convergence
in the duality with $C_b(X)$ of $|\nabla u_n|\gamma$ to
$|D_\gamma\chi_E|$. It suffices to apply the lower semicontinuity of
the total variation in open sets (see Proposition \ref{prop:sci}) to get
$$
\liminf_{n\to\infty}\int_A|\nabla
u_n|\,d\gamma\geq|D_\gamma\chi_E|(A)\qquad \text{for all $A\subset
X$ open}
$$
and then to apply \cite[Proposition~1.80]{afp}.

Denoting by $E^1$ the set of density points of $E$, it suffices to
show that $c_1(L\cap E^1)=0$; indeed, the same property with the
complement of $E$ and $1-u_n$ gives $c_1(L\cap E^0)=0$, where $E$ is
the set of rarefaction points of $E$, and since $E^0\cup E^1$ is the
complement of the essential boundary of $E$ we conclude thanks to \eqref{hino14}.

We now assume by contradiction that $G:=L\cap E^1$ has strictly
positive capacity. Since $|D\chi_E|(B_r(y))=o(r^{m-1})$ for
$\Haus{m-1}$-a.e. $y\in E^1$ and thanks to Lemma~\ref{ldensity}, we find a point
$x\in G$ and radii $r_i\downarrow 0$ such that $\lim_ic_1(G\cap
B_{r_i}(x))/r_i^{m-1}>0$ and
$|D\chi_E|(B_{r_i}(x))=o(r_i^{m-1})$. Let $\phi:[0,1]\to [0,1]$ be the
piecewise affine function identically equal to $1/2$ on $[1/4,3/4]$
and with derivative equal to $2$ on $(0,1/4)\cup (3/4,1)$. Since
$\phi\circ u_n$ are identically equal to $1/2$ on $L\supset G$, we
can apply Lemma~\ref{lpoincare2} to $1/2-\phi\circ u_n$ in the ball
$B_{r_i}(x)$ to get
$$
r_i^{-m}\int_{B_{r_i}(x)}|\phi\circ u_n-\frac12|\,dy\leq
\frac{2\kappa \omega_m}{c_1(G\cap
B_{r_i}(x_i))}\int_{B_{r_i}(x)}|\nabla u_n|\,dy.
$$
Since $\phi(0)=0$ and $\phi(1)=1$, passing to the limit as
$n\to\infty$ and using the weak$^*$ convergence of $|\nabla u_n|\gamma$
to $|D_\gamma\chi_E|$ yields
$$
r_i^{-m}\int_{B_{r_i}(x)}|\chi_E-\frac12|\,dy\leq
\frac{2\kappa\omega_m}{c_1(G\cap
B_{r_i}(x_i))}\int_{B_{r_i}(x)}\frac{1}{G_m}\,d|D_\gamma\chi_E|.
$$
Since $r_i^{m-1}/c_1(G\cap B_{r_i}(x_i))$ is uniformly bounded as
$i\to\infty$ and $|D\chi_E|(B_{r_i}(x))=o(r_i^{m-1})$ we conclude that
$$
r_i^{-m}\int_{B_{r_i}(x)}|\chi_E-\frac12|\,dy\to 0 \qquad \text{as }r_i\downarrow 0,
$$
contradicting the fact that $x\in
E^1$.\end{proof}

\section{Convergence of $T_t\chi_E$ to $1/2$ on $\partial^*E$}

In this section we shall prove Theorem~\ref{tessbou1}. By a
well-known convergence criterion in $L^2$, the stated convergence
will be a consequence of the weak$^*$ convergence of $T_t\chi_E$ to
$1/2$ in $L^\infty(X,|D_\gamma\chi_E|)$, that we shall prove in
Proposition~\ref{pweakstar}, and the following apriori estimate (see
also Remark~\ref{ralessio}):

\begin{proposition} \label{papriori14}
For any set $E$ with finite perimeter in $(X,\gamma)$ it holds
\begin{equation}\label{hino2}
\limsup_{t\downarrow 0}\int_X |T_t\chi_E|^2\,d|D_\gamma\chi_E|\leq
\frac{1}{4}|D_\gamma\chi_E|(X).
\end{equation}
\end{proposition}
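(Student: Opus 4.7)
The plan is to combine the factorization $X=F\oplus Y$ for $F\subset\tilde H$ finite-dimensional with the finite-dimensional convergence result of Proposition~\ref{findim1}, using Lemma~\ref{lrectifiable} to pass from bulk integrals to boundary ones. The first move is the decomposition
\[
\int_X (T_t\chi_E)^2\,d|D_\gamma\chi_E|\le \int_X (T_t\chi_E)^2\,|\pi_F(\nu_E)|\,d|D_\gamma\chi_E|+\int_X|\pi_{F^\perp}(\nu_E)|\,d|D_\gamma\chi_E|,
\]
which uses $(T_t\chi_E)^2\le 1$ together with the elementary inequality $1-|\pi_F(\nu_E)|\le |\pi_{F^\perp}(\nu_E)|^2\le |\pi_{F^\perp}(\nu_E)|$ (from $|\pi_F|^2+|\pi_{F^\perp}|^2=1$). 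Because $|\pi_{F^\perp}(\nu_E)|\downarrow 0$ $|D_\gamma\chi_E|$-a.e.\ as $F$ exhausts a dense subspace of $\tilde H$, dominated convergence kills the second summand, and the proof is reduced to bounding the first summand by $\tfrac14|D_\gamma\chi_E|(X)$ in the $\limsup$ as $t\downarrow 0$, for each fixed finite-dimensional $F$.

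By~\eqref{hino40}, the first summand disintegrates as $\int_Y\int_F(T_t\chi_E(z,y))^2\,d|D_{\gamma_F}\chi_{E_y}|(z)\,d\gamma_Y(y)$. I would then split the integrand by
\[
(T_t\chi_E)^2\le (T_t^F\chi_{E_y})^2+2\,|T_t\chi_E-T_t^F\chi_{E_y}|,
\]
which comes from $a^2-b^2=(a+b)(a-b)$ with $T_t\chi_E,\,T_t^F\chi_{E_y}\in[0,1]$. For the \emph{main} piece: for $\gamma_Y$-a.e.\ $y$ the slice $E_y$ has finite $\gamma_F$-perimeter, so Proposition~\ref{findim1} inside $F$ yields $T_t^F\chi_{E_y}\to \tfrac12$ pointwise $|D_{\gamma_F}\chi_{E_y}|$-a.e.; two successive applications of dominated convergence (first in $z$, then in $y$, using $y\mapsto |D_{\gamma_F}\chi_{E_y}|(F)\in L^1(\gamma_Y)$ by~\eqref{hino40}) give
\[
\int_Y\int_F (T_t^F\chi_{E_y})^2\,d|D_{\gamma_F}\chi_{E_y}|\,d\gamma_Y\;\longrightarrow\;\tfrac14\int_X|\pi_F(\nu_E)|\,d|D_\gamma\chi_E|.
\]

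For the \emph{remainder}, the symmetric form $T_t\chi_E(z,y)=T_t^F[u\mapsto T_t^Y\chi_E(u,\cdot)(y)](z)$ of~\eqref{factorization}, together with $|T_t^F g|\le T_t^F|g|$, gives
\[
|T_t\chi_E(z,y)-T_t^F\chi_{E_y}(z)|\le T_t^F\bigl[u\mapsto |T_t^Y\chi_E(u,\cdot)(y)-\chi_E(u,y)|\bigr](z).
\]
Lemma~\ref{lrectifiable}, applied with $\sigma_y=|D_{\gamma_F}\chi_{E_y}|$ (which is $G_m\Haus{m-1}\res\partial^*E_y$ on a rectifiable set) and $g_{t,y}(u)=|T_t^Y\chi_E(u,\cdot)(y)-\chi_E(u,y)|$, upgrades the resulting boundary integral to the bulk estimate
\[
\limsup_{t\downarrow 0}\int_Y\int_F|T_t\chi_E-T_t^F\chi_{E_y}|\,d|D_{\gamma_F}\chi_{E_y}|\,d\gamma_Y\;\le\;\limsup_{t\downarrow 0}\frac{1}{\sqrt t}\int_X|T_t^Y\chi_E-\chi_E|\,d\gamma.
\]
A slicewise application of Lemma~\ref{lpoincare} in the $Y$-direction and~\eqref{hino441} bounds the right-hand side by $\tfrac{2}{\sqrt\pi}\int_X|\pi_{F^\perp}(\nu_E)|\,d|D_\gamma\chi_E|$, which vanishes as $F\uparrow\tilde H$. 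Collecting everything yields the claim. The principal obstacle is precisely this final matching: applying Lemma~\ref{lrectifiable} directly to $(T_t\chi_E-\tfrac12)^2$ would fail because the bulk $L^1(\gamma)$ norm of that quantity does \emph{not} vanish, so the $1/\sqrt t$ coming from the lemma would blow up; the whole point of subtracting the finite-dimensional object $T_t^F\chi_{E_y}$ is to produce a remainder of the form $T_t^Y\chi_E-\chi_E$, whose $L^1(\gamma)$ norm decays like $\sqrt t$ by the Poincaré inequality and exactly cancels the bad factor.
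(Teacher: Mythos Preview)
Your proof is correct and follows the same overall strategy as the paper's: factorize $X=F\oplus Y$, disintegrate the surface measure via \eqref{hino40}, treat the main term with Proposition~\ref{findim1} and dominated convergence, and control the remainder through Lemma~\ref{lrectifiable} together with a Poincar\'e-type estimate, letting $F$ exhaust a dense subspace at the end.

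The organization differs in two small but pleasant ways. First, where the paper applies Jensen's inequality to push the square through the outer $T_t^Y$ (producing the comparison between $E_{y'}$ and $E_y$ in \eqref{hino4}--\eqref{hino6}), you instead use the elementary bound $a^2\le b^2+2|a-b|$ to compare $T_t\chi_E$ directly with $T_t^F\chi_{E_y}$, exploiting the reversed factorization $T_t=T_t^F\circ T_t^Y$. Second, and as a consequence, your remainder is $|T_t^Y\chi_E-\chi_E|$, which is controlled by the plain statement of Lemma~\ref{lpoincare} applied slicewise in $(Y,\gamma_Y)$; the paper's remainder $\int_Y|\chi_{E_{y'}}-\chi_{E_y}|\,\rho_t^Y(y,dy')$ requires the strengthened double-integral form \eqref{poincarestr}, which the text explicitly flags as crucial for this proposition. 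Your reorganization therefore shows that \eqref{poincarestr} can be bypassed here, at the modest cost of carrying the extra additive error $\int_X|\pi_{F^\perp}(\nu_E)|\,d|D_\gamma\chi_E|$ from the very first splitting.
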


\begin{proof} In this proof we shall use the simpler
notation
$$
T_tf(x)=\int_F f(y)\rho^X_t(x,dy)
$$
for the action of the OU semigroup. Comparing with Mehler's formula
\eqref{mehler}, we see that the measure $\rho_t^X(x,\cdot)$ is
nothing but the law of $y\mapsto e^{-t}x+\sqrt{1-e^{-2t}}y$ under
$\gamma$ (not absolutely continuous w.r.t. $\gamma$ if $t>0$ and $X$
is infinite-dimensional).

Let $f_t=T_t\chi_E$ and write, as in \eqref{factorization},
$$
f_t(z,y)=\int_Y\int_F\chi_{E_{y'}}(z')\rho^F_t(z,dz')\rho^Y_t(y,dy')
$$
where $H=F\oplus F^\perp$ is an orthogonal decomposition of $H$,
$F\subset\tilde{H}$ is finite-dimensional, $X=F\oplus Y$ and
$\gamma=\gamma_F\otimes\gamma_Y$ are the corresponding
decompositions of $X$ and $\gamma$ and $E_y=\{z\in F: (z,y)\in E\}$.
Then H\"older's inequality yields
\begin{equation}\label{hino3}
f_t^2(z,y)\leq\int_Y\biggl(\int_{E_{y'}}\rho^F_t(z,dz')\biggr)^2
\,\rho^Y_t(y,dy'),
\end{equation}
so that it suffices to estimate from above the upper limits of the
integrals
\begin{equation}\label{hino4}
\int_X\biggl[ \int_Y\biggl(\int_{E_{y'}}\rho^F_t(z,dz')\biggr)^2
\,\rho^Y_t(y,dy')\biggr]\,d|D_\gamma\chi_E|(x)
\end{equation}
as $t\downarrow 0$, with $|D_\gamma\chi_E|(X)/4$. First of all, we
notice that the quantity in square parentheses is less than 1;
hence, since \eqref{hino40} ensures that the measures in $X$
$$
|D_{\gamma_F}\chi_{E_y}|(dz)\otimes\gamma_Y(dy)
$$
monotonically converge to $|D_\gamma\chi_E|$ as $F\uparrow H$ (more
precisely, as $F$ increases to a vector space dense in $H$), it
suffices to estimate with $|D_\gamma\chi_E|(X)/4$ the upper limit as
$t\downarrow 0$ of the integrals
\begin{equation}\label{hino5}
\int_Y\int_F\biggl[
\int_Y\biggl(\int_{E_{y'}}\rho^F_t(z,dz')\biggr)^2
\,\rho^Y_t(y,dy')\biggr]\,d|D_{\gamma_F}\chi_{E_y}|(z) d\gamma_Y(y).
\end{equation}
Now, if in \eqref{hino5} we replace the innermost integral on
$E_{y'}$ with an integral on $E_y$, thanks to Fatou's lemma and
Proposition~\ref{findim1} (observe that
$\int_{E_{y}}\rho^F_t(z,dz')\leq 1$) we get immediately
\begin{eqnarray*}
&&\limsup_{t\downarrow 0} \int_Y\int_F
\biggl(\int_{E_{y}}\rho^F_t(z,dz')\biggr)^2\,d|D_{\gamma_F}\chi_{E_y}|(z) d\gamma_Y(y)\\
&\leq&\int_Y\int_F\limsup_{t\downarrow 0}
\biggl(\int_{E_{y}}\rho^F_t(z,dz')\biggr)^2\,d|D_{\gamma_F}\chi_{E_y}|(z) d\gamma_Y(y)\\
&\leq&
\frac{1}{4}\int_Y|D_{\gamma_F}\chi_{E_y}|(F)\,d\gamma_Y(y).
\end{eqnarray*}
Since the quantity above is
less than $|D_\gamma\chi_E|(X)/4$, we are led to show that the
$\limsup$ as $t\downarrow 0$ of the expressions
$$
\int_Y\int_F\int_Y\biggl|
\biggl(\int_{E_{y'}}\rho^F_t(z,dz')\biggr)^2-
\biggl(\int_{E_y}\rho^F_t(z,dz')\biggr)^2\biggr|
\,\rho^Y_t(y,dy')\,d|D_{\gamma_F}\chi_{E_y}|(z) d\gamma_Y(y)
$$
can be made arbitrarily small, choosing $F$ large enough. To this
aim, bounding the difference of the squared integrals with twice
their difference, using again that $\int_{E_{y}}\rho^F_t(z,dz')\leq 1$ it suffices to estimate the simpler expressions
\begin{equation}\label{hino6}
\int_Y\int_F\int_Y\biggl|\biggl(\int_{E_{y'}}\rho^F_t(z,dz')-
\int_{E_y}\rho^F_t(z,dz')\biggr)\biggr|\,\rho^Y_t(y,dy')\,d|D_{\gamma_F}\chi_{E_y}|(z)
d\gamma_Y(y).
\end{equation}
We can now estimate \eqref{hino6} from above with
$$
\int_Y\int_F T^F_t g_{t,y}(z) \,d|D_{\gamma_F}\chi_{E_y}|(z)
d\gamma_Y(y),
$$
where $T^F_t$ denotes the OU semigroup in $(F,\gamma_F)$ and
$$
g_{t,y}(z):=\int_Y|\chi_{E_{y'}}(z)-\chi_{E_y}(z)|\rho^Y_t(y,dy').
$$
Keeping $y$ fixed, by applying Lemma~\ref{lrectifiable} with
$\sigma_y=|D_{\gamma_F}\chi_{E_y}|$ we get that the limsup as
$t\downarrow 0$ of the expression in \eqref{hino6} is bounded above
by
\begin{equation}\label{hino8}
\limsup_{t\downarrow 0}\frac{1}{\sqrt{t}}\int_Y\int_X
g_{t,y}(z)\,d\gamma_F(z)d\gamma_Y(y).
\end{equation}
Since we can also write
$g_{t,y}(z)=\int_Y\bigl|\chi_{E_{z}}(y)-\chi_{E_z}(y')\bigr|\rho^Y_t(y,dy')$,
by \eqref{poincarestr} we get
$$
\int_Yg_{t,y}(z)\,d\gamma_Y(y)= \int_Y\int_Y
|\chi_{E_z}(y)-\chi_{E_z}(y')|\rho^Y_t(y,dy')\,d\gamma_Y(y) \leq c_t
|D_{\gamma_Y}\chi_{E_z}|(Y),
$$
so that an integration w.r.t. $z$ and Fubini's theorem give that the
$\limsup$ in \eqref{hino8} is bounded above by (taking also into
account that $c_t\sim 2\sqrt{t/\pi}$)
$$
\frac{2}{\sqrt{\pi}}\int_F|D_{\gamma_Y}\chi_{E_z}|(Y)\,d\gamma_F(z).
$$
But, according to \eqref{hino41}, we can represent the expression
above as
$$
\frac{2}{\sqrt{\pi}}\int_X|\pi_{F^\perp}(\nu_E)|\,d|D_\gamma\chi_E|.
$$
Since $|\pi_{F^\perp}(\nu_E)|\downarrow 0$ as $F$ increases to a
dense subspace of $H$, we conclude. \end{proof}

\begin{remark}\label{ralessio} {\rm
In the previous proof we used that the statement is true in finite
dimensions, see Proposition~\ref{findim1}. But actually
Proposition ~\ref{findim1} provides also a stronger information, and the proof
above could be slightly modified to obtain directly
Theorem~\ref{tessbou1} from this stronger information. However, we
prefer to emphasize a softer and surely more elementary proof of the
weak$^*$ convergence of $T_t$. Indeed, we believe that the softer
argument below (based just on the product rule \eqref{product} and
some elementary arguments) has an interest in his own. In
particular, a variant of this argument allows to prove that
$|D_\gamma\chi_E|$ is also concentrated on a kind of reduced
boundary (see the Appendix).}
\end{remark}

\begin{proposition}\label{pweakstar} As $t\downarrow 0$, $T_t\chi_E$ weak$^*$
converge to $1/2$ in $L^\infty(X,|D_\gamma\chi_E|)$.
\end{proposition}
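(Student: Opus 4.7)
The plan is to exploit the product rule \eqref{product} applied with $f=\chi_E$, together with the heuristic identity $\chi_E^2=\chi_E$, which suggests that $\chi_E$ should behave like $1/2$ against $D_\gamma\chi_E$. Since $\|T_t\chi_E\|_\infty\leq 1$ uniformly in $t$, it will suffice to prove that, for every $\phi\in C^1_b(X)$ and every $h\in\tilde{H}$,
\begin{equation}\label{eq:target}
\lim_{t\downarrow 0}\int_X\phi\,T_t\chi_E\,d\langle D_\gamma\chi_E,h\rangle_H=\tfrac12\int_X\phi\,d\langle D_\gamma\chi_E,h\rangle_H,
\end{equation}
and then to verify that the linear span of $\{\phi\langle\nu_E,h\rangle_H:\phi\in C^1_b(X),\,h\in\tilde{H}\}$ is dense in $L^1(X,|D_\gamma\chi_E|)$.

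For \eqref{eq:target}, I apply \eqref{product} with $f=\chi_E$, obtaining $D_\gamma(\chi_E T_t\chi_E)=\chi_E\nabla T_t\chi_E\,\gamma+T_t\chi_E\,D_\gamma\chi_E$, and pair this identity with $\phi$ in the direction $h$. On one side, the $BV$ integration-by-parts formula applied to $\chi_E T_t\chi_E$ rewrites $\int\phi\,d\langle D_\gamma(\chi_E T_t\chi_E),h\rangle_H$ as $-\int\chi_E T_t\chi_E\,\partial_h\phi\,d\gamma+\int\chi_E T_t\chi_E\,\phi\hat h\,d\gamma$. On the other side, Proposition~\ref{pammiss1}(b) together with the definition of the dual semigroup converts $\int\phi\chi_E\langle\nabla T_t\chi_E,h\rangle_H\,d\gamma$ into $e^{-t}\int T_t(\phi\chi_E)\,d\langle D_\gamma\chi_E,h\rangle_H$. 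Equating the two yields
$$e^{-t}\int_X T_t(\phi\chi_E)\,d\langle D_\gamma\chi_E,h\rangle_H+\int_X\phi T_t\chi_E\,d\langle D_\gamma\chi_E,h\rangle_H=-\int_X\chi_E T_t\chi_E\,\partial_h\phi\,d\gamma+\int_X\chi_E T_t\chi_E\,\phi\hat h\,d\gamma.$$

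Passing to the limit $t\downarrow 0$, the right-hand side tends to $\int_X\phi\,d\langle D_\gamma\chi_E,h\rangle_H$, because $T_t\chi_E\to\chi_E$ in $L^1(\gamma)$, whence $\chi_E T_t\chi_E\to\chi_E^2=\chi_E$ in $L^1(\gamma)$, and the $BV$ definition for $\chi_E$ identifies the limit. For the left-hand side, Mehler's formula \eqref{mehler} combined with the continuity and boundedness of $\phi$ gives, by dominated convergence in the inner integral, the pointwise convergence $T_t(\phi\chi_E)(x)-\phi(x)T_t\chi_E(x)\to 0$ for every $x\in X$, with uniform bound $2\|\phi\|_\infty$. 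Since $|D_\gamma\chi_E|$ is finite, a second application of dominated convergence gives $\int_X[T_t(\phi\chi_E)-\phi T_t\chi_E]\,d\langle D_\gamma\chi_E,h\rangle_H\to 0$; together with $e^{-t}\to 1$ and the trivial bound $|\int\phi T_t\chi_E\,d\langle D_\gamma\chi_E,h\rangle_H|\leq\|\phi\|_\infty|D_\gamma\chi_E|(X)$, the left-hand side is asymptotic to $2\int_X\phi T_t\chi_E\,d\langle D_\gamma\chi_E,h\rangle_H$. Matching limits proves \eqref{eq:target}.

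Finally, to upgrade \eqref{eq:target} to weak$^*$ convergence in $L^\infty(X,|D_\gamma\chi_E|)$, I check that the test-function class is total in $L^1(X,|D_\gamma\chi_E|)$: if $g\in L^\infty(|D_\gamma\chi_E|)$ annihilates it, then density of cylindrical smooth bounded functions in $L^1(|D_\gamma\chi_E|)$ forces $g\langle\nu_E,h\rangle_H=0$ $|D_\gamma\chi_E|$-a.e. for every $h\in\tilde{H}$, and choosing $h$ in a countable dense subset, combined with $|\nu_E|_H=1$ $|D_\gamma\chi_E|$-a.e., forces $g=0$. The uniform bound $\|T_t\chi_E\|_\infty\leq 1$ then promotes convergence on a dense set to weak$^*$ convergence. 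The delicate step is the passage to the limit of $T_t(\phi\chi_E)-\phi T_t\chi_E$ against the singular measure $|D_\gamma\chi_E|$, and this is precisely why it is essential to read $T_t\chi_E$ pointwise via Mehler's formula rather than as a $\gamma$-a.e. equivalence class.
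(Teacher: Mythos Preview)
Your proof is correct, and it takes a genuinely different route from the paper's. Both arguments hinge on the product rule \eqref{product} with $f=\chi_E$, but they exploit it differently. The paper extracts a subsequential weak$^*$ limit $f$, tests on open sets, passes from the vector identity to the scalar inequality $|D_\gamma(f_i\chi_E)|\leq f_i|D_\gamma\chi_E|+T_{t_i}^*|D_\gamma\chi_E|$, and then combines lower semicontinuity of the total variation with the symmetry $E\leftrightarrow X\setminus E$ to sandwich $f$ between $1/2$ and $1/2$. You instead pair the vector identity directly against $\phi\,d\langle D_\gamma\chi_E,h\rangle_H$, keep the \emph{equality} form of Proposition~\ref{pammiss1}(b), and compute the limit in one stroke via the pointwise fact $T_t(\phi\chi_E)-\phi\,T_t\chi_E\to 0$ (valid because $\phi$ is continuous and Mehler's formula is used pointwise); a density argument using $|\nu_E|_H=1$ then upgrades to full weak$^*$ convergence. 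Your approach is more direct---no subsequences, no lower semicontinuity---and makes transparent the formal identity $\chi_E\,D_\gamma\chi_E=\tfrac12 D_\gamma(\chi_E^2)=\tfrac12 D_\gamma\chi_E$. The paper's approach, on the other hand, avoids invoking the polar decomposition $\nu_E$ and the density step, and its inequality/symmetry structure is what generalizes to the reduced-boundary result in the Appendix.
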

\begin{proof} Let $t_i\downarrow 0$ be such that
$f_i:=T_{t_i}\chi_E$ weak$^*$ converge to some function $f$ as
$i\to\infty$. It suffices to show that $f\geq 1/2$ up to
$|D_\gamma\chi_E|$-negligible sets. Indeed, the same property
applied to $X\setminus E$ yields $1-f\geq 1/2$ up to
$|D_\gamma\chi_{X\setminus E}|$-negligible sets, and since the
surface measures of $E$ and $X\setminus E$ are the same we obtain
that $f=1/2$ in $L^\infty(X,|D_\gamma\chi_E|)$. Since $T_t\chi_E$ is
uniformly bounded in $L^\infty(X,|D_\gamma\chi_E|)$, from the
arbitrariness of $(t_i)$ the stated convergence property as
$t\downarrow 0$ follows.

By approximation, it suffices to show that
\begin{equation}\label{hino1}
2\int_Af\,d|D_\gamma\chi_E|\geq |D_\gamma\chi_E|(A)
\end{equation}
for any open set $A\subset X$; by inner approximation with smaller
open sets whose boundary is $|D_\gamma\chi_E|$-negligible, we can
also assume in the proof of \eqref{hino1} that
$|D_\gamma\chi_E|(\partial A)=0$. We use the product rule
\eqref{product} to obtain
$$
D_\gamma(f_i\chi_E)=f_i D_\gamma\chi_E+\chi_E\nabla f_i\gamma.
$$
Then, we use the relations $\nabla T_tv=e^{-t}T_t^*D_\gamma v$ (see
Proposition~\ref{pammiss1}(b)) and $|\nabla T_tv|\leq
e^{-t}T_t^*|D_\gamma v|$ with $v=\chi_E$ and $t=t_i$ to get
$$
|D_\gamma(f_i\chi_E)|\leq f_i|D_\gamma\chi_E|+
T_{t_i}^*|D_\gamma\chi_E|.
$$
Let us now evaluate both measures on $A$:
$$
|D_\gamma(f_i\chi_E)|(A)\leq \int_A f_i\,d|D_\gamma\chi_E|+ \int_X
T_{t_i}\chi_{A\cap E}\,d|D_\gamma\chi_E|.
$$
Since $T_{t_i}\chi_{A\cap E}\leq\min\{f_i,T_{t_i}\chi_A\}$ we can
further estimate
$$
|D_\gamma(f_i\chi_E)|(A)\leq 2\int_A f_i\,d|D_\gamma\chi_E|+
\int_{X\setminus A} T_{t_i}\chi_A\,d|D_\gamma\chi_E|.
$$
Finally, since $f_i\chi_E\to\chi_E$ in $L^1(X,\gamma)$, it suffices to use the fact that $T_t\chi_A\to 0$ pointwise on
$X\setminus\overline A$ and the lower semicontinuity of the total
variation in open sets (see Proposition \ref{prop:sci}) to get \eqref{hino1}.\end{proof}

\section{Representation of the perimeter measure}

In this section we shall prove Theorem~\ref{tessbou2}. We fix an
orthogonal decomposition $X=F\oplus F^\perp$ of $H$, with $F\subset
\tilde{H}$ finite-dimensional, and denote by $X=F\oplus Y$ the
corresponding decomposition of $X$. We define $E_y$, $y\in Y$, as in
\eqref{hino16} and, correspondingly, the essential boundary
$\partial^*_F E$ as in \eqref{hino17}.

Our main goal will be to show that the set $E^{1/2}$ (as defined in
Definition~\ref{disoessbou}), namely
$$
\left\{x\in X:\ \lim_{i\to\infty}
T_{t_i}\chi_E(x)=\frac{1}{2}\right\}
$$
is contained in $\partial^*_FE$ up to $\Haus{\infty-1}_F$-negligible
sets, i.e.,
\begin{equation}\label{hino18}
\Haus{\infty-1}_F(E^{1/2}\setminus\partial^*_FE)=0.
\end{equation}

\begin{proof}[Proof of \eqref{hino18}.] Let
$f_{i,y}(z)=T_{t_i}\chi_E(z,y)$. Since $\sum_i\sqrt{t_i}<\infty$ we
can use the estimates
$$
\int_Y\sum_i\int_F|f_{i,y}-\chi_{E_y}|\,d\gamma_F\,d\gamma_Y(y)=
\sum_i\int_X
|T_{t_i}\chi_E-\chi_E|\,d\gamma\leq|D_\gamma\chi_E|(X)\sum_i
c_{t_i},
$$
with $c_t$ as in Lemma~\ref{lpoincare}, to obtain that
$f_{i,y}\to\chi_{E_y}$ in $L^1(\gamma_F)$ for $\gamma_Y$-a.e. $y\in
Y$. Our first task will be to show the existence of a subsequence
$t_{i(j)}$ such that
\begin{equation}\label{hino22}
\lim_{j\to\infty}\int_F|\nabla_F f_{i(j),y}|\,d\gamma_F=
|D_{\gamma_F}\chi_{E_y}|(F)\qquad\text{for $\gamma_Y$-a.e. $y\in
Y$.}
\end{equation}
To this aim, we first show that
\begin{equation}\label{hino25}
\int_Y\biggl(\int_F|\nabla_F
f_{i,y}|\,d\gamma_F\biggr)\,d\gamma_Y\leq \int_Y
|D_{\gamma_F}\chi_{E_y}|(F)\,d\gamma_Y.
\end{equation}
In order to prove \eqref{hino25} we use
Proposition~\ref{pammiss1}(b) to get $|\nabla_F f_i|\gamma\leq
T_{t_i}^*|\pi_F(D_\gamma\chi_E)|$, hence
$$
\int_X|\nabla_F f_{i,y}|\,d\gamma\leq |\pi_F(D_\gamma\chi_E)|(X)
$$
and using \eqref{hino40} we conclude that \eqref{hino25} holds.

 Condition \eqref{hino22} now follows by the
$L^1(Y,\gamma_Y)$ convergence of $\int_F|\nabla_F
f_{i,y}|\,d\gamma_F$ to $|D_{\gamma_F}\chi_{E_y}|(F)$; in turn,
applying a convergence criterion (see for instance
\cite[Exercise~1.19]{afp}) this follows by the $\liminf$ inequality
$$
\liminf_{i\to\infty}\int_F|\nabla_F f_{i,y}|\,d\gamma_F\geq
|D_{\gamma_F}\chi_{E_y}|(F)\qquad\text{for $\gamma_Y$-a.e. $y\in
Y$.}
$$
(a consequence of the lower semicontinuity of total variation)
together with convergence of the $L^1$ norms ensured by
\eqref{hino25}.

Now, we fix $y$ such that all functions $f_{i,y}$ are continuous and
both conditions
$$
\lim_{i\to\infty}\int_F|f_{i,y}-\chi_{E_y}|\,d\gamma_F=0,\qquad
\lim_{j\to\infty}\int_F|\nabla_F f_{i(j),y}|\,d\gamma_F=
|D_{\gamma_F}\chi_{E_y}|(F)
$$
hold and apply Proposition~\ref{pfederer} to obtain that the $y$
section of $E^{1/2}$, contained in
$$
\left\{z\in F:\ \lim_{j\to\infty} f_{i(j),y}(z)=\frac{1}{2}\right\}
$$
is also contained, up to $\Haus{m-1}$-negligible sets, in
$\partial^* E_y$. Since Proposition~\ref{pbogaregu} and
\eqref{hino22} ensure that the set of exceptional $y$'s is
$\gamma_Y$-negligible, the definition of $\Haus{\infty-1}_F$ yields
\eqref{hino18}.\end{proof}

Having achieved \eqref{hino18} we can now prove
Theorem~\ref{tessbou2}. To this aim, we fix a nondecreasing family
${\cal F}=\{F_1,F_2,\ldots\}$ of finite-dimensional subspaces of
$\tilde{H}$ whose union is dense in $H$ and, using \eqref{hino18} in
conjunction with \eqref{hino19}, for $n\leq m$ we get
$$
\Haus{\infty-1}_{F_n}(E^{1/2}\setminus \bigcap_{i=m}^\infty
\partial^*_{F_i}E)=0.
$$
Letting $m\to \infty$ it follows that
$\Haus{\infty-1}_{F_n}(E^{1/2}\setminus\partial^*_{{\cal F}}E)=0$,
and since $n$ is arbitrary this proves that
\begin{equation}\label{hino20}
\Haus{\infty-1}_{{\cal F}}(E^{1/2}\setminus\partial^*_{{\cal
F}}E)=0.
\end{equation}
Now, we know that $|D_\gamma\chi_E|=\Haus{\infty-1}_{{\cal
F}}\res\partial^*_{{\cal F}}E$, hence evaluating both measures on
$\partial^*_{{\cal F}}E\setminus E^{1/2}$ and using the fact that
$|D_\gamma\chi_E|$ is concentrated on $E^{1/2}$ we get
\begin{equation}\label{hino21}
\Haus{\infty-1}_{{\cal F}}(\partial^*_{{\cal F}}E\setminus
E^{1/2})=0.
\end{equation}
The combination of \eqref{hino20} and \eqref{hino21} gives
$$
|D_\gamma\chi_E|=\Haus{\infty-1}_{{\cal F}}\res E^{1/2}.
$$
But, since ${\cal F}$ is arbitrary, this yields that $E^{1/2}$ has
finite $\Haus{\infty-1}$-measure and \eqref{hino31}, concluding the
proof.

\section{Derivative of the union of disjoint sets}
In this section we prove Corollary \ref{cor:product}.
Let us remark that, although the result is standard in finite dimensions and could be proved in different ways (for instance, using De Giorgi's rectifiability theorem),
the argument below is very elementary.
Although the proof is more or less the same as the one in \cite[Lemma 2.2]{fmp} (where the authors are dealing with the classical notion of perimeter in $\R^m$),
we believe it is worth to repeat the argument for reader's convenience, and for underlying the importance of the fact that in our representation formula \eqref{hino31}
the measure $\Haus{\infty -1}$ is universal.

\begin{proof}[Proof of Corollary \ref{cor:product}]
The fact that $E\cup F$ has finite perimeter follows immediately from the definition.

Since the sets $(E\cup F)^{1/2}$, $E^{1/2}$, $F^{1/2}$ are $\Haus{\infty -1}$-uniquely determined,
we can assume that they all have been defined using the same sequence $(t_i)$.

As $\gamma(E\cap F)=0$ we have $\chi_{E\cup F}=\chi_E+\chi_F$, so that by \eqref{hino31}
\begin{eqnarray}\nonumber
\nu_{E\cup F}\Haus{\infty-1}\res(E\cup F)^{1/2}&=&D_\gamma\chi_{E\cup F}=D_\gamma\chi_E+D_\gamma\chi_F\\
&=&\nu_E \Haus{\infty-1}\res E^{1/2}+\nu_F \Haus{\infty-1}\res F^{1/2}.
\label{dim lemma unione}
\end{eqnarray}
Since $E^{1/2}\cap F^{1/2}\subset \left\{x\in X:\
\lim_{i\to\infty}T_{t_i}\chi_{E\cup F}(x)=1\right\}$ we have
\begin{equation}
\label{eq:union}
(E\cup F)^{1/2}\cap E^{1/2}\cap F^{1/2}=\emptyset,
\end{equation}
so \eqref{tesi lemma unione} follows from
\eqref{dim lemma unione}. Moreover, again by \eqref{dim lemma unione} and \eqref{eq:union}, for every Borel set $C\subseteq E^{1/2}\cap  F^{1/2}$ we have
$$
\int_{C}\nu_E+\nu_F\,d\Haus{\infty-1}=\int_{C\cap (E\cup F)^{1/2}}\nu_{E\cup F}\,d\Haus{\infty-1}=0,
$$
which implies that $\nu_E=-\nu_F$ at $\Haus{\infty-1}$-a.e. point in $E^{1/2}\cap  F^{1/2}$, as desired.
\end{proof}

\section{Appendix: The reduced boundary}

The classical finite-dimensional definition of reduced boundary
\cite{DeG1} is based on the requirements of existence of the limit
\begin{equation}\label{dgredbou}
\nu_E(x):=\lim_{r\downarrow
0}\frac{D\chi_E(B_r(x))}{|D\chi_E|(B_r(x))}
\end{equation}
and modulus of the limit $\nu_E(x)$ equal to 1. It is not hard to
show that points in the reduced boundary are Lebesgue points for the
vector field $\nu_E$, relative to $|D\chi_E|$, hence the proof that
$|D\chi_E|$-almost every point $x$ is in the reduced boundary is based
on Besicovitch covering theorem, a result not available in infinite
dimensions.

In \cite[Definition 7.2]{AMP}, the authors proposed the following
definition of reduced boundary based on the OU semigroup:
\begin{definition}[Gaussian Reduced Boundary]\label{def:reduced} Let $E$ be a Borel set of
finite perimeter in $(X,\gamma)$.
We denote by ${\mathcal F}E$ the set of points $x \in X$ where the limit
\begin{equation}\label{eq:reduced}
\nu_E(x):=\lim_{t\downarrow 0} T_{t}\biggl(\frac{T^*_{t}D_\gamma
\chi_E}{T^*_{t}|D_\gamma \chi_E|}\biggr)(x)
\end{equation}
exists and satisfies $|\nu_E(x)|=1$.
\end{definition}

As observed in \cite[Section 7]{AMP}, a natural open problem is to
prove that $|D_\gamma\chi_E|$ is concentrated on ${\mathcal F}E$.
Here, we show how the soft argument used in the proof of
Proposition~\ref{pweakstar} allows to prove easily the weaker result
\begin{equation}
\label{eq:L1 reduced} \lim_{t\downarrow 0}T_th_t=1\quad \text{in
}L^1(X,|D_\gamma\chi_E|)\qquad\text{with}\qquad
h_t:=\frac{|T^*_{t}D_\gamma \chi_E|}{T^*_{t}|D_\gamma \chi_E|}.
\end{equation}
In particular, we deduce that along any subsequence $(t_i)\downarrow 0$
such that
$$
\sum_i \int_X |T_{t_i}h_{t_i}- 1|\,d|D_\gamma\chi_E|<\infty
$$
it holds
$$
\lim_{i\to\infty}T_{t_i}\biggl(\frac{|T^*_{t_i}D_\gamma
\chi_E|}{T^*_{t_i}|D_\gamma \chi_E|}\biggr)(x)=1\qquad \text{for
$|D_\gamma\chi_E|$-a.e. $x\in X$.}
$$
\begin{proof}[Proof of \eqref{eq:L1 reduced}.]
Set $f_t:=T_t\chi_E$. Arguing as in the proof of Proposition \ref{pweakstar}, the product rule \eqref{product} yields
$$
|D_\gamma(f_t\chi_E)|(X)\leq \int_X f_t \,d|D_\gamma\chi_E| + \int_X h_t \chi_E \,dT_t^*|D_\gamma\chi_E|.
$$
Replacing $E$ by $X\setminus E$ and and $f_t$ by $1-f_t$, we also have
$$
|D_\gamma((1-f_t)\chi_{X\setminus E})|(X)\leq \int_X (1-f_t)
\,d|D_\gamma\chi_E| + \int_X h_t (1-
\chi_E)\,dT_t^*|D_\gamma\chi_E|.
$$
Adding together the two inequalities above, we obtain
\begin{align*}
|D_\gamma(f_t\chi_E)|(X)+|D_\gamma((1-f_t)\chi_{X\setminus E})|(X)
&\leq |D_\gamma\chi_E| (X)+ \int_X h_t\,d T_t^*|D_\gamma\chi_E|\\
&=|D_\gamma\chi_E| (X)+ \int_X T_t h_t\,d|D_\gamma\chi_E|.
\end{align*}
By lower semicontinuity of the total
variation (see Proposition \ref{prop:sci}), letting $t \downarrow 0$ we get
\begin{align*}
2|D_\gamma\chi_E| (X) &\leq \liminf_{t\downarrow 0}\Bigl(|D_\gamma(f_t\chi_E)|(X)+|D_\gamma((1-f_t)\chi_{X\setminus E})|(X)\Bigr)\\
&\leq |D_\gamma\chi_E| (X)+ \liminf_{t\downarrow 0} \int_X T_t h_t\,d|D_\gamma\chi_E|,
\end{align*}
so that
$$
|D_\gamma\chi_E|(X)\leq\liminf_{t\downarrow 0}\int_X T_t
h_t\,d|D_\gamma\chi_E|.
$$
This, combined with the fact that $0\leq T_t h_t\leq 1$ (as $0 \leq
h_t \leq 1$) proves that
$$
\int_X | T_t h_t -1|\,d|D_\gamma\chi_E| =\int_X (1-T_t h_t)\,d|D_\gamma\chi_E| \to 0 \qquad \text{as $t\downarrow 0$},
$$
as desired.
\end{proof}

\end{document}